\newtheorem{theorema}{Theorem}
\title{A Trace--Path Integral Formula over Function Fields}
\author{Yan Yau Cheng}
\date{\vspace{-5ex}}
\begin{document}

\maketitle 

{\abstract{We show that an arithmetic path integral over the \(\ell\)-torsion of a Jacobian \(J[\ell]\) is equal to the trace of the Frobenius action on a representation of the Heisenberg group \(H(J[\ell])\), up to an explicitly determined sign. This is an arithmetic analogue of trace--path integral formulae which arise in quantum field theory, where path integrals over a space of sections of a fibration over a circle can be expressed as the trace of the monodromy action on a Hilbert space.}}

\begingroup
\begin{NoHyper}
\let\thefootnote\relax\footnotetext{University of Edinburgh, \href{mailto:yanyau.cheng@ed.ac.uk}{yanyau.cheng@ed.ac.uk} \\MSC2020 Classification: 11G20, 11G25, 11G45, 81T45}
\end{NoHyper}
\endgroup

\section{Introduction}
In a topological quantum field theory, one often cares about path integrals of an action functional \(A\) over a space \(\cF\) of sections of a fibration \(M\times [0,1]/F \to S^1\) (determined by a monodromy \(F\circlearrowright M\)). The path integral of \(A\) over this space can in fact be computed by instead looking at the action of \(F\) on a Hilbert space \(\cH\) of functions as 
\[\Tr(F|\cH)=\int_{\cF} e^{iA(\gamma)}d\gamma.\]

The purpose of this paper is to explain how each of these concepts has an arithmetic analogue, and to prove a version of this trace path integral formula for \(J\) the Jacobian of a smooth projective curve \(X\) over the finite field \(\FF_q\). 
\begin{center}
\begin{tabular}{rcl}
Fibration \(M\times [0,1]/F \to S^1\) &::& Jacobian \(\ell\)-torsion \(J[\ell]\to \Spec \FF_q\)\\
Space of sections \(\cF\) &::& Rational points \(J[\ell](\FF_q)\)\\
Action functional \(A\) &::& Pairing \(A\) arising from class field theory \\
Geometric quantisation of phase space \(\cH\) &::& Global sections of theta line bundle \(\cH\) \\ 
Monodromy action \(F\) &::& Frobenius action \(\Fr_q\)\\
\end{tabular}
\end{center}

Furthermore, the arithmetic path integral of \(A\) can be expressed as the trace of \(\Fr_q\) on \(\cH\) just as in the physical setting.
\begin{theorema}[\Cref{thm:MainTheoremFull}]
	Let \(J\) be the Jacobian of a genus \(g\) curve \(X\) over a finite field \(\FF_q\). 
	Let \(\ell\) be an odd prime satisfying \(q\equiv 1\pmod {\ell^2}\). If \(\Fr_q\) acts semisimply on the \(\FF_\ell\)-vector space \(J[\ell]\), and \(J(\FF_q)\) has no points of order \(\ell^2\), then we have the following equality
	\[\tr(\Fr_q|\cH) =\legendre{(-1)^g \det'(1-\Fr_q) \det(A)}{\FF_\ell}\sum_{\gamma\in J[\ell](\FF_q)}e^{2\pi iA(\gamma)},\]

	where \(\legen{\cdot}{\FF_\ell}\) denotes the Legendre symbol, and \(\det'(1-\Fr_q)\) is the regularised determinant of the linear map \(1-\Fr_q\) on vector space \(J[\ell]\). That is, this is the determinant of \(1-\Fr_q\) after quotienting the space \(J[\ell]\) by its kernel. 
\end{theorema}

Regularised determinants are used in mathematical physics to assign finite values on self-adjoint operators of infinite dimensional spaces. Even in finite dimensional vector spaces, the removal of the kernel can be viewed as a kind of regularisation. The occurence of regularised determinants in our formula strengthens the analogy between arithmetic path integrals and physical path integrals. 

Determining signs in arithmetic explicitly can often be an intricate and delicate task, and the main contribution of this paper is the explicit Legendre symbol above. Additionally we also show that this statement is true as long as the Frobenius acts semisimply on \(J[\ell]\). A weaker version of \cref{thm:MainTheoremFull} was initially an unpublished result of Minhyong Kim and Akshay Venkatesh, where they show the formula up to an undetermined sign in the special case where the vector space \(J[\ell]\) has an invariant Lagrangian with respect to the Frobenius action. 

This result adds to the series of analogies between topology and arithmetic first noticed by Mazur in \cite{MazurRemarksAlexanderPolynomial} and expanded upon in detail in \cite{KnotsAndPrimes}. For \(X\) a smooth projective curve over a finite field \(\FF_q\), it is natural to compare \(\bar{X}:= X\times_{\FF_q} \Spec \bar{\FF}_q\) with a smooth compact Riemann surface \(\Sigma\). On the other hand, \(X\) itself has more in common with a three-manifold \(N\) -- for instance \(X\) has \'etale cohomological dimension 3 and both sit in Cartesian squares
\[\begin{tikzcd}
	{\bar{X}} & X \\
	{\Spec \bar{\FF}_q} & {\Spec \FF_q}
	\arrow[hook, from=1-1, to=1-2]
	\arrow[from=1-1, to=2-1]
	\arrow[from=1-2, to=2-2]
	\arrow[from=2-1, to=2-2]
\end{tikzcd}\qquad\qquad \begin{tikzcd}[cramped]
	\Sigma & N \\
	1 & {S^1}
	\arrow[hook, from=1-1, to=1-2]
	\arrow[from=1-1, to=2-1]
	\arrow[from=1-2, to=2-2]
	\arrow[from=2-1, to=2-2]
\end{tikzcd}\]
where the map \(N\to S^1\) is a fibration and \(\Spec \bar{\FF}_q\) is a point. 
Under the knots and primes analogy, which views \(S^1\) as being analogous to \(\Spec \FF_q\), we can view \(X\) as analogous to a three-manifold fibred over a circle, with fibres \(\bar{X}\).

Moreover, we can write \(N\) as a mapping torus \[N = \Sigma\times[0,1]/F\] where \(\Sigma\times \{0\}\) is identified with \(\Sigma\times \{1\}\) via the monodromy action \(F:\Sigma \to \Sigma\). In a similar manner we can view \(X\) as a mapping torus, where the automorphism \(\Fr_q\in \pi_1(\Spec \FF_q)\) act on \(\bar{X}\) via the Frobenius automorphism
\[X \text{ `}=\text{' } \bar{X}\times[0,1]/\Fr_q.\]

Quantum field theories are often defined as an integral over the `space of all paths' \cite{HallQuantumBook}. For example, let the manifold \(M\) be the phase space, let \(P(x,y)\) denote the space of all paths from \(y\) to \(x\) in \(M\), 
\[P(x,y):= \{\gamma:[0,T] \rightarrow M | \gamma(0)=y, \gamma(T)=x\}.\]
An action functional is a function \(A: P(x,y) \rightarrow \CC\), then the kernel function can be expressed as an integral over all paths in \(P(x,y)\)
\[K_T(x,y)= \int_{P(x,y)}e^{iA(\gamma)}d\gamma.\]
If we informally consider the kernel function \(K_T(x,y)\) as a `matrix' with infinite dimensions, then we can write
\[\Tr(K_T)= \int_x\int_{P(x,x)}e^{iA(\gamma)}d\gamma dx= \int_\Omega e^{iA(\gamma)}d\gamma\] 
where \(\Omega\) is defined to be the space of all loops in \(M\). 

In a similar manner which will be explained in detail in the next section, when a field theory is \textit{topological} in nature (i.e. the theory does not depend on the metric of the manifold), then it is possible to express the trace of a monodromy action \(F\colon M\rightarrow M\) as a path integral over the space of sections \(\cF\) of the bundle \(([0,T]\times M)/F\rightarrow S^1\)
\[\Tr(F)=\int_{\cF} e^{iA(\gamma)}d\gamma.\]

Arithmetic path integrals over number fields have been introduced and computed in \cite{CKKPPYAbelianArithmeticChern}, \cite{CKNoteAbelianArithmetic}, and \cite{CCKKPYPathIntegralsPadic}, and this paper introduces a function field analogue of arithmetic path integrals. A related construction also motivated by the same analogies can be found in \cite{AVSymplecticLFunctions}.

Viewing the curve \(X\) as a `3-dimensional spacetime', the Jacobian \(\bar{J}=\Jac(\bar{X})\) can be thought of as a space of fields on \(X\). For large values of \(\ell\), the Jacobian torsion \(J[\ell]\) can be viewed as an approximation of the space \(\bar{J}\). Thus in this paper we take the arithmetic analogue of the phase space of \(X\) to be \(\ell\)-torsion \(J[\ell]\). (Unfortunately an assumption we make in this paper is that the group of \(\ell\)th roots of unity are contained in the base field \(\FF_q\), so \(\ell\) cannot be arbitrarily large, one interesting direction of further research would be to generalise the results of this paper with the \(\mu_\ell\subset \FF_q\) condition removed.)

Just as how \(X\) can be viewed as a fibred three-manifold, we can in a similar manner view the \(J[\ell]\) as being fibred over the circle
\[\begin{tikzcd}
	{\bar{J}[\ell]} & J[\ell] \\
	{\Spec(\bar{\FF}_q)} & {\Spec(\FF_q).}
	\arrow[hook, from=1-1, to=1-2]
	\arrow[from=1-1, to=2-1]
	\arrow[from=1-2, to=2-2]
	\arrow[from=2-1, to=2-2]
\end{tikzcd}\]
Moreover, we can view a rational point \(\gamma\in J[\ell](\FF_q)\) as a section over this fibre bundle over \(S^1\)
\[\begin{tikzcd}
	{J[\ell]} \\
	{\Spec \FF_q.}
	\arrow[shift right=2, from=1-1, to=2-1]
	\arrow["\gamma"', shift right=2, from=2-1, to=1-1]
\end{tikzcd}\]
This is why in our setting, the integral over the space of all sections is instead replaced with a discrete sum over all \(\FF_q\)-rational points of \(J[\ell]\)
\[\sum_{\gamma\in J[\ell](\FF_q)}e^{2\pi iA(\gamma)}.\]
On the other hand, the arithmetic analogue of the Hilbert space \(\cH\) is given by the space of global sections of a tensor power of the theta line bundle \(\Theta\), in analogy to the geometric quantisation construction arising from physics. This will be explained in detail in \cref{sec:Physics_Quantisation,sec:Arithmetic_Trace}.

\subsubsection*{Outline of Paper}

In \cref{sec:PhysicsBackground} we will outline the physical background behind the trace--path integral formula that motivates our main theorem, in particular in \cref{sec:Physics_Quantisation} we will discuss a method to obtain a Hilbert space \(\cH\) from the phase space \(M\) via a process called \emph{geometric quantisation}. 

The main theorem itself is proved by evaluating each side of the equality separately and then comparing the two sides. 
In \cref{sec:Arithmetic_Trace} we evaluate the trace side of the equality. We first define the space \(\cH\) which is a representation of the Heisenberg group \(H(J[\ell])\). In \cref{sec:Heisenberg_Rep} and \cref{sec:Symplectic_Action} we express the \(\Fr_q\) action on \(\cH\) explicitly using machinery from \cite{GH_QuantisationFiniteField}. In \cref{sec:Symplectic_Linear_Algebra} we study the \(g\)-invariant spaces of symplectic vector space for a symplectomorphism \(g\). We finally compute the trace of the Frobenius action on \(\cH\) in \cref{sec:Trace_Sum_Calculation} by decomposing \(\bar{J}[\ell]\) into a direct sum of \(\Fr_q\) invariant symplectic subspaces and computing the traces in those spaces separately. 

In \cref{sec:Arithmetic_PathIntegrals} we will compute the path integral side of the equality. We first properly define the arithmetic action \(A\) in terms of a pairing in geometric class field theory. Then in \cref{sec:Abelian_CS} we show that the arithmetic action \(A\) actually coincides with a function field analogue of the abelian arithmetic Chern-Simons action defined in \cite{AbelianACS2019}. Finally we evaluate the path integral in \cref{thm:Path_Integral}.

We prove our main theorem in \cref{sec:Conclusion} by combining the results from the two previous sections. In \cref{sec:Examples} we will provide computational examples of the trace--path integral formula, and outline some future research directions.

\subsubsection*{Acknowledgements}

This paper is written as part of my PhD research at the University of Edinburgh. I thank my advisor Minhyong Kim for his invaluable guidance throughout this project, and for introducing me to arithmetic field theories. Also many thanks to Akshay Venkatesh and Minhyong Kim for their permission to base my result as a strengthening of their unpublished work. Thanks also to Masanori Morishita, Dohyeong Kim, and Will Sawin for helpful comments on an earlier draft of the paper. I'd also like to thank Eric Ahlqvist, Danil Ko\v{z}evnikov, Subrabalan Murugesan, Yuji Okitani, Parth Shimpi, Jiacheng Tang, and Harvey Yau, for many helpful discussions. Finally, many thanks the anonymous referee for their detailed report and comments, and in particular for pointing out that the Hamiltonian phase space path integral formalism is the better analogy to use in \cref{sec:PhysicsBackground} rather than the Lagrangian path integral. 

\section{Physical Background} \label{sec:PhysicsBackground}

In this section we go over the physics background that motivates our main result. References for this section include \cite{HallQuantumBook}, \cite{ZeidlerQuantumFieldTheoryI}, \cite{FSGaugeFieldsIntroduction}, \cite{Zinn-JustinPathIntegralsQuantum}, \cite{ASCondensedMatterField}, \cite{EMRVMathematicalFoundationsGeometric}, and \cite{CLLQuantizationKahlerManifolds}.

\subsection{The Path Integral Formalism}

Let the manifold \(M\) denote the \textit{phase space} in classical mechanics, which encodes the state of a physical system at a given time. This is typically an even-dimensional symplectic manifold consisting of data about the positions \(q\) and momenta \(p\) of particles. A \textit{configuration space} is a Lagrangian submanifold $L$ of $M$, roughly corresponding to the position coordinates. (There are also other models for the space of wave functions, but this will be the model that we use in this paper.) 

In a quantum system, the phase space is replaced with a Hilbert space \(\cH\), which encodes all the possible states of a quantum system. The process in which the Hilbert space \(\cH\) is obtained from the phase space \(M\) is called \textit{quantisation}, and we will discuss this process in more detail in \cref{sec:Physics_Quantisation}.

One typical example is when the phase space is a cotangent bundle $M=T^*X$ for some manifold $X$, in which case we could take the configuration space to be $L=X$ viewed as the zero-section in $M$, and \(\cH=L^2(X)\) is the \textit{quantisation} of the symplectic manifold \(T^*X\).

Setting Planck's constant to be 1, the Schr\"odinger's equation can be written as
\[\frac{d\psi}{dt}=-iH\psi,\]
where the \(\psi\) is a time-dependent wave function $\psi\in \cH$, and $H$ is a self-adjoint operator on \(\cH\) called the Hamiltonian, representing energy. 
The path integral formalism arises when representing time evolution according to Schr\"odinger's equation as
\[[e^{-iHT}\psi] (x)=\int_{y\in L} K_T(x,y)\psi(y)dy\]
for some kernel function $K_T(x,y)$. This is an integral over the configuration space \(L\).
 
The classical Feynmannian path integral formalism interprets the kernel function \(K_T(x,y)\) as an integral over a space of paths in the configuration space \(L\). However, the Hamiltonian phase space path integral formalism detailed in \cite[Section 2.1]{FSGaugeFieldsIntroduction} and \cite[Chap. 10]{Zinn-JustinPathIntegralsQuantum} instead shows that the same kernel function \(K_T(x,y)\) can instead be written as an integral over paths in the phase space \(M\). More precisely, 
\[K_T(x,y)=\int_{P(x,y)} e^{iA(\gamma)}d\gamma.\]
at
Here, $A(\gamma)$ is the classical action defined on paths $\gamma\colon [0,T]\to M$, and \(P(x,y)\) is a space of paths in the phase space \(M\), where the position co-ordinates of the path start and end at \(y\) and \(x\) respectively. More precisely, 
\[P(x,y)=\{\gamma\colon [0,T]\to M:q(\gamma(0))=y, q(\gamma(T))=x\}.\]

We can interpret the kernel function \(K_T(x,y)\) as a `matrix' with infinite dimensions, and under this viewpoint one has informally
\[\Tr( e^{-iHT})=\int K_T(x,x)dx.\]

But since we have interpreted \(K_T(x,y)\) as an integral over paths in the phase space \(M\), we can substitute the kernel function for the path integral to obtain
\[\Tr( e^{-iHT})=\int_{x\in L}\int_{P(x,x)} e^{iA(\gamma)}d\gamma dx.\]

Here, the integral is taken over all paths \(\gamma\) in our phase space \(M\) where the position co-ordiantes of the end points are equal. In particular the momenta co-ordinates of the paths need not be equal on the end points. 




\subsection{Twists and Trace of Monodromy}

Note that any map from $S^1$ to $M$ can be viewed as a section of the trivial bundle
\[S^1\times M\to S^1.\]
From a geometric point of view, it is natural to `twist' this situation slightly and  integrate over a space of sections of a non-trivial bundle
\[Y=([0,T]\times M)/ F\to S^1,\]
where $Y$ is a mapping torus, and the monodromy map $F\colon M\to M$ is used to glue $T\times M$ to $0\times M$.
Sections of this fibre bundle can be identified with $c\colon[0,T]\to M$ such that $Fc(T)=c(0).$

The diffeomorphism $F$ acts on functions in \(\cH\) via $$F\psi(x)=\psi (F^{-1}x).$$
Then we can write
\[[Fe^{-iHT}\psi] (x)=[e^{-iHT}\psi] (F^{-1}x)=\int K_{T}(F^{-1} x,y)\psi(y) dy.\]
That is, $K_{T}(F^{-1} x,y)$ is the integral kernel for the operator $Fe^{-iHT}$.
Recalling from earlier that $K_{T}(x,y)=\int_{P(x,y)} e^{iA(q)}dq$, we have: 
\[\Tr(Fe^{-iHT})=\int K_{T}(F^{-1} x,x)dx=\iint_{P( F^{-1}x,x)} e^{iA(\gamma)}d\gamma dx.\]

Similarly to the un-twisted case, we can write the kernel function as an integral over just the classical paths, which again can be expressed as an integral over the space of sections of the fibre bundle. In particular, when the theory is {\em topological} so that the Hamiltonian is zero, we get
$$\Tr(F)=\int_{\cF} e^{iA(\gamma)}d\gamma$$
where we use $\cF$ to denote the space of paths in the twisted bundle $Y\to S^1$ such that the position co-ordinates form a loop.

\subsection{Geometric Quantisation} \label{sec:Physics_Quantisation}

As mentioned above, quantisation refers to the process in which a phase space is replaced with a quantum Hilbert space. More precisely, quantisation is a process
\[(M, \omega) \mapsto \cH.\]
Which takes a symplectic manifold \((M, \omega)\) to a Hilbert space \(\cH\). This is also accompanied with a process that sends functions \(f\) on \(M\) to operators \(\hat{f}\) on the space \(\cH\). There are various ways in which quantisation can be performed, and usually one needs more data than just the symplectic manifold \(M\). In this section we will briefly sketch the process of \textit{Geometric Quantisation}. 

Suppose that \((M, \omega)\) is a symplectic manifold. Assume that the symplectic form is in \(\omega \in H^2(M, \ZZ)\) and lies in the image of the Chern map \(c_1\colon H^1(M, \mathcal O_M^\times) \rightarrow H^2(M, \ZZ)\). Then we can construct a line bundle \(\cL\) such that \(\omega\) is its Chern class
\[c_1(\cL)= \omega.\]
Then the \textit{pre-quantisations} of \(M\) can be viewed as the global sections of this line bundle
\[\cH^\rm{pre}_k = H^0(M, \cL^{\otimes k})= \Gamma(M, \cL^{\otimes k}).\]
The prequantisation is `too big' as a space, in order to obtain the quantisation one often takes a polarisation of \(M\) and define the quantisation to be the polarised sections of the line bundle instead. 

If we additionally suppose \(M\) is K\"ahler with complex structure \(J\), then the K\"ahler-polarised sections is simply the space of holomorphic sections of \(\cL^{\otimes k}\)
\[\cH_k = \Gamma_\rm{hol}(M, \cL^{\otimes k}).\]
This means that the sections of the line bundle \(\cL^{\otimes k}\) are precisely the algebraic sections of the line bundle. 

In particular if \(M=X_{hol}\) is a complex projective variety and \(\cL\) is a line bundle on \(X\) with Chern class \(\omega\), then we can take the quantisation of \(M\) to be the space of sections \(\cH_k=\Gamma(X,\cL^{\otimes k})\). 

As an example, when \(X=\RR^2=\CC\), we can take
\[\cH=L^2(\RR,dx) \quad \text{ or } \quad \cH=L^2_\rm{hol}(\CC, e^{-|z|^2 idzd\bar{z}}).\]
In fact, by the \textit{Stone-von Neumann Theorem}, since the actions of the center are equal on these two spaces, they are isomorphic as representations of the Heisenberg algebra. 

\section{Trace of Frobenius in the Arithmetic Hilbert Space} \label{sec:Arithmetic_Trace}

Let \(W=W(\bar{\FF}_q)\) be the Witt vectors over \(\bar{\FF}_q\) (This is a mixed characteristic ring that is an infinite unramified extension of \(\ZZ_p\) with \(\Gal(W/\ZZ_q)\cong \Gal(\bar{\FF}_q/\FF_q)\cong \hat\ZZ\)). Throughout this section we fix an embedding \(W\hookrightarrow \CC\). 

Once again let \(X/\FF_q\) be a curve over a finite field. We fix an algebraic closure \(\bar{\FF}_q\) denote by \(\bar{X}/\bar{\FF}_q\) to be the base change of \(X\) to the algebraic closure. The goal of this section is to define the arithmetic analogue of the quantisation \(\cH\) of our phase space \(\bar{J}[\ell]\), and then to compute the trace of the Frobenius (which is the arithmetic analogue of the monodromy) on this space. 

Define \(Y\) to be a lift of \(\bar{X}\) to \(W\). That is, \(Y\) is a curve over \(W\) such that the reduction of \(Y\) mod \(p\) is \(\bar{X}\). Since we have fixed an embedding of \(W\hookrightarrow \CC\), we can view \(Y\) as a complex curve upon base changing to \(\CC\). Letting \(J_Y\) be the Jacobian of \(Y\times_W \CC\), the Jacobian comes equipped with a canonical principal polarisation coming from the intersection pairing on \(Y\times_W \CC\), which induces a Riemann form on \(J_Y\). Let \(\Theta\rightarrow J_Y\) be the theta line bundle associated to this polarisation. We fix an odd prime \(\ell\) such that \(q\equiv 1 \mod \ell\) (i.e. so that \(\mu_\ell \subset \FF_q\)), then we define:

\begin{definition}\label{def:cH_Quantisation}
	We define the arithmetic analogue of the quantisation of \(J[\ell]\) to be \(\cH\), the global sections of the theta line bundle
	\[\cH \defeq \Gamma(J_Y,\Theta^{\otimes \ell}).\]
\end{definition}

Let \(\anglebrackets{\cdot,\cdot}\colon J[\ell]\times J[\ell]\rightarrow \mu_\ell(\bar{\FF}_q)\) be the Weil pairing on \(J[\ell]\) induced by the canonical principal polarisation, this is a symplectic form on the \(2g\)-dimensional \(\FF_\ell\) vector space \(J[\ell]\). 

By \cite[Lemma 2.6]{OSZUniformityConjecturesAbelian}, it is shown that the Weil pairing when viewed as a class in \(H^2(J_Y, \mu_\ell)\) coincides with the Chern class of \(c_1(\Theta)\) up to sign. This is why we view the space of sections \(\Gamma(J_Y,\Theta^{\otimes \ell})\) as an analogue of the quantisation of our phase space \(J[\ell]\). 

Since \(X\) is defined over \(\FF_q\) and \(\mu_\ell \subset \FF_q\), the Frobenius \(\Fr_q\) acts trivially on \(\mu_\ell(\bar{\FF}_q)\) and
\[\anglebrackets{\Fr_q(x),\Fr_q(y)}=\Fr_q\anglebrackets{x,y}=\anglebrackets{x,y}.\]

Thus \(\Fr_q\) is a symplectomorphism in the symplectic group \(\Sp(J[\ell])\). We will show in \cref{thm:UniqueRep} that \(\cH\) is a representation of the symplectic group \(\Sp(J[\ell])\). Thus we can consider the action of \(\Fr_q\) on \(\cH\) and our goal is the compute the trace of this action
\[\Tr(\Fr_q | \cH).\]

Under some mild assumptions, we have the following explicit formula for the trace of the Frobenius action on \(\cH\).

\begin{theorem}\label{thm:TraceFormula}
	Suppose that the Frobenius \(\Fr_q\) acts semi-simply on the space \(J[\ell]\). Then
	\[\tr(\Fr_q | \cH)=\legen{(-1)^{g-\dim_{\ell} J[\ell](\FF_q)/2}\det'(1-\Fr_q)}{\FF_\ell}\sqrt{|J[\ell](\FF_q)|}.\]
	Where \(\legen{\cdot}{\FF_\ell}\) is the Legendre symbol, and \(\det'\) is the regularised determinant. 
\end{theorem}

This is a strengthening of an unpublished result by Minhyong Kim and Akshay Venkatesh where the sign in the Legendre symbol was undetermined and there is an added assumption that there must exist a Frobenius--invariant Lagrangian subspace of \(J[\ell]\).

An outline of the proof is as follows: We will show in \cref{thm:UniqueRep} that the representation \(\cH\) is in fact the unique representation  of the Heisenberg group \(H(J[\ell])\) with identity central character. We then use the machinery from \cite{GH_QuantisationFiniteField} outlined in \cref{con:Rep_From_Lagrangian} and \cref{con:Intertwining_Morphism} to write down this representation \(\fH(V)\) and the action of a symplectomorphism \(g\in \Sp(V)\) on this representation explicitly. We derive an expression for the trace of such an action in \cref{thm:Trace_Any_Lagrangian} when given a fixed Lagrangian \(M\) of \(V\). Finally, in \cref{lem:Trace_sum_of_spaces,thm:trace_separable_char_poly,thm:Trace_Char_Poly} we decompose the symplectic vector space \(V\) into a direct sum of \(g\)-invariant symplectic subspaces and compute the trace of \(g\) action on \(\fH(V)\) as a product of the traces of the \(g\) action on the smaller spaces to complete the proof.

\subsection{Representations of the Heisenberg Group}\label{sec:Heisenberg_Rep}

Before we prove \cref{thm:TraceFormula} we first establish some theory about the representations of Heisenberg groups over finite fields. Let \(V/\FF_\ell\) be a symplectic vector space with symplectic form \(\omega:V\times V \rightarrow \FF_\ell\). 

\begin{definition}
	Define \(H(V)\) to be the \textit{Heisenberg group}, a central extension of \(V\) by \(\FF_\ell\). Explicitly as a set: \(H(V)=\FF_\ell\times V\) and the group operation is given by
	\[(\lambda,a)\circ(\mu,b)= \left(\lambda + \mu + \frac12\omega(a,b), a+b\right).\]
	This group has center \[Z:=Z(H(V))=\{(z,0):z\in \FF_\ell\}.\]
	There is also an action by the finite symplectic group \(\Sp(V)\) on \(H(V)\) via its action on the \(V\) part.
\end{definition}

A finite analogue of the Stone-von Neumann property, proven in \cite[1.1]{GHQuantizationSymplecticVector}, states that for any non-trivial character \(\psi:Z\rightarrow \CC^\times\), there exists a unique (up to isomorphism) irreducible complex unitary representation \(\fH=\fH(V,\psi)\) of \(H(V)\), \(\pi:H(V) \rightarrow \mathfrak H\) such that
\[\pi|_Z(z)= \psi(z)\cdot \Id_{\mathfrak H}.\]

We write \(\fH=\fH(V)=\fH(V,\psi)\) if it is clear from context what \(\psi\) or \(V\) are. 
Now we construct the representation \(\fH(V,\psi)\) explicitly following the construction in section 2.1 of loc. cit. 

\begin{construction} \label{con:Rep_From_Lagrangian}
	Given a character \(\psi: \FF_\ell \rightarrow \CC^\times\), let \(\CC(H(V),\psi)\) denote the space of (set-valued) functions \(f:H(V) \rightarrow \CC\) such that for \(h\in H(V)\) and \(z \in \FF_\ell\) (where \(z\) is viewed as an element of the center of \(H(V)\)) 
	\[f(z\cdot h)= \psi(z)f(h).\]
	Let an oriented Lagrangian \(M^\circ\) be the langrangian \(M\) along with a choice of non-zero vector \(o_M\in \bigwedge^{top} M\). Then consider the vector subspace \(C_{M^\circ}\) of \(\CC(H(V),\psi)\) consisting of functions \(f\) such that any \(m\in M\) acts trivially
	\[f(m\cdot h)= f(h).\]
	This vector space has a right \(H(V)\) action given by right translation
	\[\pi_M(h)[f](h')=f(h'\cdot h).\]
\end{construction}

\begin{lemma} \label{thm:UniqueRep}
	Upon fixing an isomorphism \(\FF_\ell \to \mu_\ell\subset \CC^\times\) which sends \(1\) to \(\zeta\), the Weil pairing is a symplectic form on \(J[\ell]\), and the space \(\cH\) is the unique (up to almost unique isomorphism given by scalar multiplication by \(a\in \FF_\ell\)) irreducible representation of \(H(J[\ell])\) with the identity central character \(\psi(z)=\zeta^z\). 
\end{lemma}

\begin{proof}
	By an analogue of the Stone Von-Neumann Theorem \cite[Cor 6.4.3, Ex 6.10.3]{BLComplexAbelianVarieties}, the global sections of any line bundle \(\cL\) is a unique irreducible representation of the Theta group \(\mathcal G(\cL)\) with identity central character. We will now show that the Heisenberg group \(H(J[\ell])\) is a subgroup of the theta group \(\mathcal G(\Theta^{\otimes \ell})\), and the restricted representation is still irreducible.

	By \cite[p. 225]{MumfordAbelianVarieties} the theta group is a central extension of the group \(K(\cL)= \Ker(\phi_\cL)\) where \(\phi_\cL\colon J_Y \rightarrow \Pic^0 J_Y\) is given by \(x\mapsto T_x^*\cL \otimes \cL\)
	\[0\rightarrow \CC^\times \rightarrow \cG(\cL) \rightarrow K(\cL) \rightarrow 0.\]
	Since \(\Theta\) is a principal polarisation, it follows that \(\phi_\Theta\) is an isomorphism and \(K(\Theta)= 0\). Furthermore, since \(\phi_{\Theta^{\otimes \ell}}= \ell \phi_\Theta\) (\cite[p. 60]{MumfordAbelianVarieties}), it follows that \(K(\Theta^{\otimes \ell})= \Ker (\phi_{\Theta^{\otimes \ell}}) = J_Y[\ell]\).

	On the other hand, the Heisenberg group \(H(J_Y[\ell])\) is an extension of \(J_Y[\ell]\) by \(\mu_\ell\). Additionally it is known from \cite[p. 228 (5)]{MumfordAbelianVarieties} that the commutator pairing on \(K(\Theta^{\otimes \ell})\) co-incides with the Weil pairing on \(J_Y[\ell]\) induced by polarisation \(\Theta^{\otimes \ell}\). 
	
	Thus there are injections
	\[\begin{tikzcd}
		0 & {\mu_\ell} & {H(J_Y[\ell])} & {J_Y[\ell]} & 0 \\
		0 & \CC^\times & {\cG(\Theta^{\otimes\ell})} & {K(\Theta^{\otimes\ell})} & 0.
		\arrow[from=1-1, to=1-2]
		\arrow[from=1-2, to=1-3]
		\arrow[hook', from=1-2, to=2-2]
		\arrow[from=1-3, to=1-4]
		\arrow[hook', from=1-3, to=2-3]
		\arrow[from=1-4, to=1-5]
		\arrow[from=2-1, to=2-2]
		\arrow[from=2-2, to=2-3]
		\arrow[from=2-3, to=2-4]
		\arrow[equals, from=2-4, to=1-4]
		\arrow[from=2-4, to=2-5]
	\end{tikzcd}\]

	So \(\cH\) is a representation of \(H(J_Y[\ell])\) with the identity central character. To see that the representation is irreducible, suppose that \(\cH \cong U\oplus V\) is reducible as a \(H(J_Y[\ell])\)-representation. But any element of \(\cG(\Theta^{\otimes\ell})\) will differ from an element of \(H(J_Y[\ell])\) by an element in \(\CC\), but since elements in \(\CC\) acts on \(\cH\) by scaling, this means that \(U\) and \(V\) are also invariant subspaces of \(\cG(\Theta^{\otimes\ell})\), contradicting that \(\cH\) is an irreducible representation of \(\cG(\Theta^{\otimes\ell})\).

	The uniqueness of this representation follows from another analogue of the Stone Von-Neumann Theorem, see \cite[Thm 1.1]{GHQuantizationSymplecticVector}.

	Finally, since we have picked \(q\equiv 1 \mod \ell\), the reduction map from \(W\) to \(\FF_q\) induces an isomorphism of the \(\ell\)-torsion subgroups \(J[\ell]\) and \(J_Y[\ell]\) as Galois modules, which then induces an isomorphism between \(H(J_Y[\ell])\) and \(H(J[\ell])\), completing the proof. 
\end{proof}

\subsubsection{Canonical Intertwining Morphisms}

Given any pair of oriented Lagrangians \(M^\circ, L^\circ\), it is shown in \cite[Thm 2.2.3]{GHQFF09} that there exists canonical isomorphisms for any pair of oriented Lagrangians
	\[T_{M^\circ,L^\circ}\colon C_{L^\circ}\simrightarrow C_{M^\circ}.\]

We describe the construction of \(T_{M^\circ,L^\circ}\) explicitly, following \cite[2.4]{GHQFF09}. 

\begin{construction} \label{con:Intertwining_Morphism}
	Let \(M^\circ= (M, o_M), L^\circ=(L, o_L)\) be two oriented Lagrangians. Set \(I\defeq M\cap L\) and \(n_I \defeq \tfrac{\dim (I^\perp/I)}{2}\).

	The top exterior powers of \(M\) and \(L\) decompose canonically
	\[\begin{aligned}
		\bigwedge^{top}M &= \bigwedge^{top}I \bigotimes \bigwedge^{top}M/I\\ \bigwedge^{top}L &= \bigwedge^{top}I \bigotimes \bigwedge^{top}L/I.
	\end{aligned}\]

	Since the top exterior powers are 1 dimensional, the orientations \(o_M, o_L\) can be decomposed as 
	\[\begin{aligned}
	o_M &= \iota_M\otimes o_{M/I}
	\\o_L &= \iota_L\otimes o_{L/I}
	\end{aligned}\]
	where \(\iota_M, \iota_L \in \bigwedge^{top}I, o_{M/I}\in \bigwedge^{top} M/I\), and \(o_{L/I}\in \bigwedge^{top} L/I\).

	Define the averaging function \(F_{M^\circ,L^\circ}\colon C_{L^\circ}\simrightarrow C_{M^\circ}\) to take a function in \(C_L\) and sum its values over a transversal of \(M\)
	\[F_{M^\circ,L^\circ}[f](v) = \sum_{m+I \in M/I} f(m\cdot v).\]

	Define the normalisation constant
	\[A_{M^\circ,L^\circ} = (G(\tfrac{1}{2},\ell)/\ell)^{n_I} \left(\frac{(-1)^{\binom{n_I}{2}} \frac{\iota_L}{\iota_M}\cdot \omega_\wedge (o_{L/I}, o_{M/I})}{\FF_\ell}\right).\]

	Where: 
	\begin{itemize}
		\item \(\legendre{\cdot}{\FF_\ell}\) is the unique quadratic character of the multiplicative group \(\FF_\ell^\times\). This is the unique non-zero morphism \(\FF_\ell^\times\rightarrow \{\pm 1\}\). (In the case where \(\ell\) is prime, this is the Legendre symbol.)

		\item \(G(\alpha,\ell)\) is the one dimensional Gauss sum
			\[G(\alpha,\ell) = \sum_{z\in \FF_\ell} \psi(\alpha z^2).\]

		\item \(\omega_\wedge\colon\bigwedge^{top} L/I \times \bigwedge^{top} M/I \rightarrow \FF_\ell\) is the pairing induced by the symplectic form \(\omega\).
	\end{itemize}

	Then the canonical intertwining morphism is the averaging morphism times the normalisation constant
	\[T_{M^\circ,L^\circ} =A_{M^\circ,L^\circ} \cdot F_{M^\circ,L^\circ}.\]

\end{construction}

\subsection{Symplectic Actions on the Heisenberg Representations}\label{sec:Symplectic_Action}

Let \(g\in \Sp(V)\), then \(g\) acts on \(H(V)\), which in turn acts on representations of \(H(V)\).

We wish to understand the trace \(\tr(g|\fH(V))\), but by the finite analogue of the Stone Von-Neumann theorem \cite[Theorem 1.1]{GHQuantizationSymplecticVector}, \(\fH(V)\) is isomorphic to \(C_{M^\circ}\) for any given lagrangian \(M^\circ\). Thus it suffices to make the \(g\) action on \(C_{M^\circ}\) explicit.

Suppose that the image of \(M\) under \(g\) is \(gM\), then it is easy to check that there is a map from \(C_{M^\circ}\) to \(C_{gM^\circ}\) via pre-composing by \(g^{-1}\)
\[\begin{aligned}
C_{M^\circ} &\rightarrow C_{gM^\circ}
\\ \phi &\mapsto \phi\circ g^{-1}.
\end{aligned}\]

Thus the action of \(g\) on \(C_{M^\circ}\) is the composite of the following maps (c.f. \cite[Section 2.6]{GH_QuantisationFiniteField})
\[C_{M^\circ} \xrightarrow{g} C_{gM^\circ} \xrightarrow{T_{M^\circ, gM^\circ}} C_{M^\circ}.\]

\begin{lemma}\label{thm:Trace_Any_Lagrangian}
	Let \(g \in \Sp(V)\) be any symplectomorphism. Given any Lagrangian \(M\), and any complement \(M'\) such that \(M\oplus M' = V\), let \(\mathcal S\) be the set
	\[\mathcal S \defeq \{x\in M': gx-x\in M+gM\}.\]
	and for each \(x\in S\), pick \(m_x,n_x\in M\) such that
	\[gx-x=m_x+gn_x.\]
	Then
	\[\tr(g|C_{M^\circ}) = A_{M^\circ\!,gM^\circ} \cdot \sum_{x\in \mathcal S} \psi\left(\frac12\omega(m_x+n_x,x)\right).\]
\end{lemma}

\begin{proof}
	Note that any \(f\in \CC(H(V),\psi)\) is uniquely determined by its values on \(\{0\}\times V \subset H(V)\), since the center must act via the identity. Moreover since the Lagrangian subspace \(M\) acts trivially, \(f\in C_{M^\circ}\) is uniquely determined by its values on the group \(\{0\}\times M'\).

	Thus there is a \(\CC\)-basis \(\{i_x\}_{x\in M'}\) on \(C_{M^\circ}\) indexed by elements in \(M'\) such that \(i_x\) restricts to the indicator function on \(\{0\}\times M'\). Explicitly \(i_x \in C_{M^\circ}\) is the function
	\[\begin{cases}
	i_x(z,x)= \psi(z)& 
	\\i_x(z,m+x)= \psi(z-\tfrac12\omega(m,x))& \text{for } m\in M\text{, since } m\cdot (0,x) = (\tfrac12\omega(m,x),m+x)
	\\i_x(z,v)=0& \text{for } v\notin x+M.
	\end{cases}\]

	Note that via fixing the basis \(\{i_x\}\), the trace of \(g\) on \(C_{M^\circ}\) is simply \(A_{M^\circ\!,gM^\circ}\) times the coefficient of \(i_x\) appearing in \(F_{M^\circ\!,gM^\circ}\circ g[i_x]\). But this coefficient is equal to evaluation at \((0,x)\), so
	\[\tr(g|C_{M^\circ}) = A_{M^\circ\!,gM^\circ} \cdot \sum_{x\in M'}F_{M^\circ\!,gM^\circ}\circ g[i_x](0,x).\]
	We now evaluate the image of each indicator function \(i_x\) under \(F_{M^\circ\!,gM^\circ}\circ g\), at \((0,x)\).

	By precomposing with \(g^{-1}\) we get the function \(g[i_x](z,h)=i_x(z,g^{-1}(h))\), which explicitly evaluates as
	\[\begin{cases}
	g[i_x](z,gx)= \psi(z)& 
	\\g[i_x](z,gm+gx)= \psi(z-\tfrac12\omega(m,x))& \text{for } m \in M
	\\g[i_x](z,v)=0& \text{for } v\notin gx+gM.
	\end{cases}\]

	Finally we put \(g[i_x]\) through the averaging morphism \(F_{M^\circ, gM^\circ}\) to obtain
	\[\begin{aligned}
		F_{M^\circ\!,gM^\circ}\circ g[i_x](0,x)&= \sum_{m+I\in M/I} g[i_x](m\cdot (0,x))
		\\&=\sum_{m+I\in M/I} g[i_x](\tfrac12\omega(m,x),m+x).
	\end{aligned}\]

	Terms in this sum are non-zero only if \(m+x\in gx+gM\), or in other words, the function \(i_x\) contributes to the trace of \(g\) if and only if \(gx-x\in M+gM\). In this case, we write
	\[gx-x=m_x+gn_x\]

	for some fixed \(m_x,n_x\in M\).
	Note further that since the sum is taken over cosets \(m+I\in M/I\), the above sum has precisely one non-zero term. Thus we have
	\[\begin{aligned}
	F_{M^\circ\!,gM^\circ}\circ g[i_x](0,x)&= g[i_x](\tfrac12\omega(m_x,x),m_x+x)
	\\&=g[i_x](\tfrac12\omega(m_x,x),-gn_x+gx)
	\\&=\psi(\tfrac12\omega(m_x,x)-\tfrac12\omega(-n_x,x))
	\\&=\psi(\tfrac12\omega(m_x+n_x,x))
	\end{aligned}\]

	We quickly check that this sum is independent of the choice of \(m_x\) and \(n_x\). If \(m_x,n_x\) were replaced with \(m_x'=m_x+a,n_x'=n_x+b\), then since \(gn_x+m_x=gn_x'+m_x'\) we must have \(a=-gb\in M\cap gM=I\). Then
	\[\omega(a,x)=\omega(a,m_x+x)=\omega(-gb,-gn_x+gx)=-\omega(b,n_x+x)=-\omega(b,x).\]
	Which implies \(\omega(m_x+n_x,x)=\omega(m_x'+n_x',x)\), so this sum is independent of the choices of \(m_x\) and \(n_x\).

	Finally, our trace is
	\[\tr(g|C_{M^\circ}) = A_{M^\circ\!,gM^\circ} \cdot \sum_{x\in \cS} \psi(\tfrac12\omega(m_x+n_x,x))\]
	as desired.
\end{proof}

An immediate and important consequence of the lemma is the following special case:

\begin{corollary}\label{Cor: Invariant Lagrangian}
	Let \(g\in \Sp(V)\) be a semi-simple symplectomorphism. Suppose that there exists a Lagrangian \(M\) that is also invariant under \(g\), i.e. \(g(M)=M\). Then
	\[\tr(g|C_M)=\legendre{\det(g|M)}{\FF_\ell}\sqrt{|V^g|}.\]
	Where \(V^g\) are the \(g\)-fixed points of \(V\). 
\end{corollary}

\begin{proof}
	Since \(g\) is semi-simple, let \(M'\subset V\) be a complementary \(g\)-invariant subspace such that \(V=M\oplus M'\). Then \(M=gM\) and \(gM'=M'\). For any \(x\in \mathcal S\), we have that \(x-gx\in M \cap M' = 0\). Thus \(x\in \mathcal S\) if and only if \(x=gx\). For each such \(x\) we can pick \(n_x=m_x=0\), which turns the trace into the following:
	\[\tr(g|C_{M^\circ}) = A_{M^\circ\!,gM^\circ} \cdot \sum_{x\in \mathcal S} \psi(\tfrac12\omega(0,x))=A_{M^\circ\!,gM^\circ} \cdot \sum_{x\in \mathcal S} 1=  A_{M^\circ\!,gM^\circ} \cdot |\mathcal S|\]
	where 
	\[S= \{x\in M'| gx=x\}= (M')^g\]
	is the set of \(g\)-fixed points of \(M'\). 

	We first compute \(|\mathcal S|\). The symplectic pairing \(\omega\) induces an isomorphism from \(V\) to its dual
	\[\begin{aligned}
		D\colon V&\rightarrow V^*
		\\x &\mapsto \omega(x,-).
	\end{aligned}\]

	Since \(g\) is a symplectomorphism, this isomorphism is \(g\)-equivariant, i.e. for all \(x\in V\)
	\[g\circ D (x)=\omega(x,g^{-1}(-))=\omega(gx,-) = D\circ g(x).\]

	The inclusion \(M\hookrightarrow V\) induces a restriction map \(r\colon V^*\rightarrow M^*\). Composing with \(D\) we note that the kernel of the map \(r\circ D\colon V\rightarrow M^*\) is precisely \(M\). Thus (as \(g\)-modules)
	\[M'\simeq V/M \simeq M^*.\]

	So the fixed points of \(M'\) are the same as the fixed points of \(M^*\). Note that \(\phi\in M^*\) is a fixed point if and only if for any \(m\in M\),  \(\phi(m)=\phi(g^{-1}(m))\). In other words, \(\phi\) must factor through the co-invariant module \(M/(\id-g^{-1})M= M/(g-\id)M=:M_{g}\), and thus
	\[(M^*)^{g}\simeq (M_{g})^*.\]
	
	Moreover, from the exact sequence
	\[0\rightarrow M^{g} \rightarrow M \xrightarrow{g-\id} M\rightarrow M_{g}\rightarrow 0,\]
	we can deduce that (since \(M\) is a finite module) \(M\) and \(M'\) have the same number of invariant elements
	\[|(M')^{g}|=|(M^*)^{g}|=|(M_{g})^*|=|M^{g}|.\]

	Thus we conclude
	\[|(M')^{g}|=\sqrt{|(M')^{g}||M^{g}|}=\sqrt{|(M'\times M)^{g}|}=\sqrt{|V^{g}|}.\]

	Finally we compute \(A_{gM^\circ,M^\circ}\). In our setting \(I\defeq M\cap gM=M\) and \(n_I\defeq\frac{\dim(I^\perp/I)}{2}=0\). Moreover the \(\omega_{\wedge}\) term vanishes since \(M/I\) is trivial. Thus we have
	\[A_{gM^\circ,M^\circ}=\legendre{\frac{o_{M}}{o_{gM}}}{\FF_\ell}= \legendre{\det(g|M)}{\FF_\ell}.\]

	Thus \(\tr(g|C_M)=\legendre{\det(g|M)}{\FF_\ell}\sqrt{|V^g|}\) as desired.
\end{proof}

\subsection{Invariant Spaces of a Symplectic Vector Space}\label{sec:Symplectic_Linear_Algebra}

We prove some facts about the invariant spaces of a symplectomorphism \(g\in \Sp(V)\). Let $\omega$ be a symplectic form on a \(2g\)-dimensional vector space $V$ over a finite field $\FF$ of characteristic \(\neq 2\). Let $g \in \Sp(V,\omega)$ be a symplectomorphism, and suppose \(g\) is represented by the matrix \(S\) under the standard symplectic basis. 

\begin{proposition}
	\(\det(S)=1\)
\end{proposition}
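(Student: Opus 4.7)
The plan is to exploit the defining matrix identity $S^T J S = J$ for a symplectomorphism, where $J$ denotes the Gram matrix of $\omega$ in the standard symplectic basis, in combination with the theory of Pfaffians.

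First I would take determinants on both sides: the relation $S^T J S = J$ gives $\det(S)^2 \det(J) = \det(J)$, and since $\omega$ is non-degenerate we have $\det(J) \neq 0$, hence $\det(S)^2 = 1$. Because the characteristic of $\FF$ is not $2$, this yields $\det(S) = \pm 1$. The substantive content of the proposition is therefore to rule out the value $-1$.

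To do so I would invoke the standard identity $\operatorname{Pf}(A^T M A) = \det(A)\operatorname{Pf}(M)$, valid for any antisymmetric matrix $M$ and any square matrix $A$ of the same size over a field of characteristic $\neq 2$. Applied with $M = J$ and $A = S$, the symplectic condition rewrites as $\operatorname{Pf}(J) = \det(S)\operatorname{Pf}(J)$. Since $\operatorname{Pf}(J)^2 = \det(J) \neq 0$, the Pfaffian $\operatorname{Pf}(J)$ is a unit in $\FF$, and cancelling gives $\det(S) = 1$.

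This is a classical fact and I do not anticipate any real obstacle. The only subtlety worth flagging is the standing hypothesis $\operatorname{char}(\FF) \neq 2$, which is needed both to distinguish $+1$ from $-1$ and for the Pfaffian to behave as above. A coordinate-free variant would observe that $\omega^{\wedge g} \in \bigwedge^{2g} V^*$ is a nonzero top form preserved by $g$, forcing $g$ to act trivially on the one-dimensional top exterior power; this is conceptually cleaner but is slightly less clean in small characteristic because of the factorials that appear, which is why I prefer the Pfaffian route here.
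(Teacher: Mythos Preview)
Your Pfaffian argument is correct and is a genuinely different route from the paper's. The paper takes precisely the coordinate-free variant you describe at the end: it argues that $S^*\omega=\omega$ implies $S^*(\omega^{\wedge g})=\omega^{\wedge g}$, and since $\omega^{\wedge g}$ lies in the one-dimensional space $\bigwedge^{2g}V^*$ on which $S$ acts by $\det S$, this forces $\det S=1$. Your reservation about that approach is well founded and in fact pinpoints a gap in the paper's version: in a standard symplectic basis one has $\omega^{\wedge g}=g!\cdot e_1^*\wedge f_1^*\wedge\cdots\wedge e_g^*\wedge f_g^*$, so when $\operatorname{char}\FF\le g$ this top form vanishes and the argument proves nothing. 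The Pfaffian identity $\operatorname{Pf}(A^TMA)=\det(A)\operatorname{Pf}(M)$ is a polynomial identity with integer coefficients, hence valid over any commutative ring, so your proof works uniformly for all $\FF$ of odd characteristic regardless of how large $g$ is relative to $\ell$. Since the paper imposes no bound of this kind, your route is the more robust one here.
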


\begin{proof}
	Viewing \(\omega\) as an element of \(\Lambda^2V^*\), consider \(\Lambda^g \omega \in \Lambda^{2g}V^*\). The top exterior power is 1 dimensional and the determinant of \(S\) is equal to its scalar action on the top exterior power \(\Lambda^{2g}V^*\). But since \(S^*\omega = \omega\), it follows that \(S^*(\Lambda^g \omega)= \Lambda^g \omega\) and thus \(\det S = 1\).
\end{proof}

\begin{proposition}\label{prop:eval_multiplicity}
	Let \(\chi_S(t)= \det(tI-S)\) denote the characteristic polynomial of \(S\). If \(\lambda\neq \pm1\) is a root of \(\chi_S\) with multiplicity \(d\), then \(\lambda^{-1}\) is also a root with multiplicity \(d\). 

	Moreover, if \(\pm1\) is a root of \(f_S\), then it will occur with even multiplicity.
\end{proposition}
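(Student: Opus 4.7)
The plan is to exploit the fact that any symplectic matrix \(S\) satisfies \(S^{T}JS=J\), where \(J\) is the matrix of \(\omega\) in the standard basis. Rearranging, \(S^{-1}=J^{-1}S^{T}J\), so \(S^{-1}\) is conjugate to \(S^{T}\). Since conjugation preserves the characteristic polynomial and transposition does too, we obtain \(\chi_{S^{-1}}=\chi_{S^{T}}=\chi_{S}\). On the other hand the eigenvalues of \(S^{-1}\) are the reciprocals of those of \(S\), so the multisets of eigenvalues \(\{\lambda_i\}\) and \(\{\lambda_i^{-1}\}\) coincide. This immediately yields the first claim: the map \(\lambda\mapsto\lambda^{-1}\) pairs up eigenvalues \(\neq\pm1\), so any such \(\lambda\) and \(\lambda^{-1}\) occur with the same multiplicity \(d\).

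For the claim about \(\pm 1\) the plan is to upgrade the previous observation to a palindromic identity for \(\chi_S\). Using \(\det(S)=1\) (from the previous proposition) and expanding,
\[
t^{2g}\chi_{S}(1/t)=t^{2g}\prod_i(1/t-\lambda_i)=\prod_i(-\lambda_i)(t-\lambda_i^{-1})=\det(S)\,\chi_{S^{-1}}(t)=\chi_{S}(t).
\]
So \(\chi_S\) is self-reciprocal: \(\chi_S(t)=t^{2g}\chi_S(1/t)\).

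Now suppose \(\chi_S(t)=(t-1)^{k}q(t)\) with \(q(1)\neq 0\). Substituting into the palindromic identity and simplifying \((1/t-1)^{k}=(-1)^{k}(t-1)^{k}/t^{k}\) forces \((-1)^{k}t^{2g-k}q(1/t)=q(t)\); evaluating at \(t=1\) gives \((-1)^{k}q(1)=q(1)\), and since \(q(1)\neq 0\) we conclude \(k\) is even. The identical argument, with \((t+1)^{k}\) in place of \((t-1)^{k}\) and evaluation at \(t=-1\), handles the eigenvalue \(-1\); this uses the hypothesis \(\operatorname{char}\FF\neq 2\) so that \(1\neq -1\) and the reduction to \(q(\pm 1)\neq 0\) is genuine.

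The only real subtlety is the second half: one might at first only conclude that the sum of the multiplicities of \(+1\) and \(-1\) is even (from the pairing among the other eigenvalues together with \(\dim V=2g\)). The palindromic identity is what upgrades this to the statement that each of the two multiplicities is individually even, which is exactly what is needed in \Cref{thm:Path_Integral} to make sense of \((-1)^{(\dim_\ell J[\ell](\FF_q))/2}\) when \(\ell\equiv 3\pmod 4\).
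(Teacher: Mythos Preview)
Your argument is correct and follows the same route as the paper: both derive \(\chi_{S^{-1}}=\chi_S\) from \(S^{-1}=\Omega^{-1}S^{T}\Omega\) and then use the self-reciprocity \(\chi_S(t)=t^{2g}\chi_S(1/t)\). The only cosmetic difference is in the second half: the paper observes that the product of all roots equals \(\det S=1\) to force the multiplicity of \(-1\) to be even (and then uses parity of \(2g\) for \(+1\)), whereas you extract both parities directly by factoring out \((t\mp 1)^k\) and evaluating the palindromic identity at \(t=\pm 1\); these are equivalent uses of the same identity.
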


\begin{proof}
	Let \(\Omega\) be the matrix representing \(\omega\), then \(S\) satisfies \(S^T \Omega S = \Omega\). Rearranging we get \[S^{-1}=\Omega^{-1}S^T\Omega\]
	Thus \(S^{-1}\) and \(S\) have equal characteristic polynomials. On the other hand,
	\[\chi_{S^{-1}}(t)= \det(tI-S^{-1})=\det(tS-I)\det(S^{-1})=t^{2g}\det(S-\tfrac{1}{t}I)= t^{2g}\chi_S(\tfrac{1}{t}).\]

	Thus if \(\lambda\) is a root of \(\chi_S\) with multiplicity \(d\), then so is \(\lambda^{-1}\). 

	The product of all the roots of \(\chi_S(t)\) must equal \(\det(S)=1\). Since every non-\(\pm1\) eigenvalue must come in reciprocal pairs, it follows that if \(-1\) is a root of \(\chi_S\), it must occur with even multiplicity. 
	
	Finally, \(S\) is even-dimensional and thus \(1\) must also occur as a root with even multiplicity. 
\end{proof}

For an eigenvalue \(\lambda\) of \(S\) with algebraic multiplicity \(d\), let \(S_\lambda \defeq \ker(\lambda I -S)^d \) denote the \textit{generalised eigenspace} of \(V\) corresponding to \(\lambda\).

\begin{proposition}
	Suppose that the characteristic polynomial of \(S\) splits completely in \(\FF\). Given (not neccesarily distinct) eigenvalues \(\lambda, \mu\) of \(S\), then
	\begin{enumerate}
		\item If \(\lambda\mu\neq 1\), then the spaces \(S_\lambda\) and \(S_\mu\) are orthogonal. i.e. for \(v\in S_\lambda, w\in S_\mu\),
		\[\omega(v,w)=0.\]
		In particular this implies that when \(\lambda\neq\pm1\), \(S_\lambda\) is a isotropic subspace of \(V\). 
		\item If \(\lambda = \pm1\), then \(S_\lambda\) is a symplectic subspace of \(V\). 
	\end{enumerate}
\end{proposition}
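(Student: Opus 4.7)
The plan is to prove part (1) by induction on the combined generalized-eigenspace depth of the input vectors, and then derive part (2) as a direct consequence of part (1) combined with the generalized eigenspace decomposition and a dimension count.

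For part (1), I would fix $v \in S_\lambda$ and $w \in S_\mu$ and let $d, e \geq 1$ be minimal with $(S - \lambda I)^d v = 0$ and $(S - \mu I)^e w = 0$. The core identity is that since $S$ is symplectic, $\omega(v, w) = \omega(Sv, Sw)$; writing $Sv = \lambda v + (S - \lambda I) v$ and $Sw = \mu w + (S - \mu I) w$ and expanding yields
\[(1 - \lambda\mu)\, \omega(v, w) = \lambda\, \omega(v, (S - \mu I) w) + \mu\, \omega((S - \lambda I) v, w) + \omega((S - \lambda I) v, (S - \mu I) w).\]
In the base case $d = e = 1$ the right-hand side vanishes outright, and since $\lambda\mu \neq 1$ one concludes $\omega(v,w) = 0$. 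For the inductive step, $(S - \lambda I) v$ remains in $S_\lambda$ but with strictly smaller depth, and similarly $(S - \mu I) w$ lies in $S_\mu$ with smaller depth, so by the inductive hypothesis every term on the right vanishes, forcing $\omega(v,w) = 0$. Taking $\mu = \lambda$ under the hypothesis $\lambda \neq \pm 1$ then recovers the isotropy of $S_\lambda$.

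For part (2), since $\chi_S$ splits completely over $\FF$, I would use the generalized eigenspace decomposition $V = \bigoplus_\nu S_\nu$. Fix $\lambda = \pm 1$, so that $\lambda^{-1} = \lambda$. Part (1) applied pairwise shows that $S_\lambda$ is orthogonal to every $S_\nu$ with $\nu \neq \lambda$ (since $\lambda \nu = 1$ forces $\nu = \lambda$), giving $\bigoplus_{\nu \neq \lambda} S_\nu \subseteq S_\lambda^\perp$. The non-degeneracy of $\omega$ on $V$ gives $\dim S_\lambda^\perp = \dim V - \dim S_\lambda$, matching the dimension of this subspace and forcing equality. Consequently $S_\lambda \cap S_\lambda^\perp = 0$, so $\omega|_{S_\lambda}$ is non-degenerate and $S_\lambda$ is a symplectic subspace.

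The main thing to get right will be the inductive set-up in part (1): one must check that $S_\lambda$ is stable under $S - \lambda I$ (immediate since $S_\lambda$ is $S$-invariant) and that the nilpotency depth strictly decreases, so that the displayed identity genuinely reduces the problem to a strictly smaller case. Beyond this bookkeeping there is no serious obstacle; the dimension count in part (2) is then essentially a formality.
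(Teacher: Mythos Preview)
Your proposal is correct and follows essentially the same approach as the paper. Part (1) is identical in substance: the paper also inducts on the sum of the nilpotency depths, expanding $\omega(v,w)=\omega(Sv,Sw)$ with $Sv=\lambda v + (S-\lambda I)v$ and $Sw=\mu w + (S-\mu I)w$; for part (2) the paper argues element-wise (a vector in the radical of $\omega|_{S_\lambda}$ is, by part (1), orthogonal to every other $S_\nu$ and hence to all of $V$), which is the same content as your dimension count $\dim S_\lambda^\perp = \dim V - \dim S_\lambda = \dim\bigoplus_{\nu\neq\lambda}S_\nu$.
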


\begin{proof}
	1) Let \(v\in S_\lambda, w\in S_\mu\). We say \(v\) has rank \(r\) if it lies in the kernel of \((\lambda I -S)^r\) but not \((\lambda I -S)^{r-1}\), and similarly for \(w\), we will induct on the sum of the ranks of \(v\) and \(w\).

	The base case is when \(v,w\) are both eigenvectors, in that case
	\[\omega(v,w)=\omega(Sv,Sw)= \omega(\lambda v,\mu w)= \lambda\mu\omega(v,w).\]
	Since \(\lambda\mu\neq 1\), it follows that \(\omega(v,w)=0\).

	For the inductive case, suppose \(v\) has rank \(r_1\) and \(w\) has rank \(r_2\), and assume that \(\omega(v',w')=0\) whenever the ranks of \(v',w'\) sum to less than \(r_1+ r_2\). 
	
	Then by the definition of rank, \(Sv=\lambda v+ v'\) and \(Sw=\mu w+ w'\), where \(v',w'\) have ranks \(r_1-1, r_2-1\) respectively (If a vector has rank 0 then it is simply the zero vector). Then
	\[\begin{aligned}
		\omega(v,w)=\omega(Sv,Sw)=\omega(\lambda v+v',\mu w+ w')&= \omega(v',w')+\omega(\lambda v,w')+\omega(v',\mu w)+\omega(\lambda v,\mu w)
		\\&=\omega(\lambda v,\mu w)=\lambda\mu\omega(v,w).
	\end{aligned}\]

	Once again since \(\lambda\mu\neq 1\), it follows that \(\omega(v,w)=0\), completing the proof. 

	2) To show that \(S_\lambda\) is a sympletic subspace of \(V\), it suffices to show that \(\omega|_{S_\lambda}\) is a non-degenerate form, since bilinearity and antisymmetry is inherited from \(\omega\). 
	
	Suppose that \(v_0\in S_\lambda\) satisfies \(\omega(v_0,w)=0 \) for any \(w\in S_\lambda\). Note that since \(\lambda=\pm1\), for any \(\mu\neq\lambda\), \(\mu\lambda\neq 1\) and thus by the previous part, \(\omega(v_0,w)=0\) for all \(w\in S_\mu\). Since \(V\) is a direct sum of all its generalised eigenspaces, this implies that \(\omega(v_0,w)=0\) for all \(w\in V\). Since \(\omega\) is non-degenerate on \(V\), it follows that \(v=0\), proving non-degeneracy of \(\omega\) in \(S_\lambda\). 
\end{proof}

\begin{lemma}\label{thm:V_Decomposition}
	Suppose \(g\in \Sp(V)\) is semisimple, then it is possible to decompose \(V\) as a direct sum of \(g\)-invariant symplectic subspaces
	\[V=S_1\oplus S_{-1}\oplus V_1\oplus \cdots \oplus V_r.\]
	Where \(S_{\pm1}\) is the \(\pm 1\) eigenspace of \(g\) respectively, and \(V_i\) are subspaces such that the restriction \(g|_{V_i}\) of the \(g\)-action to \(V_i\) has characteristic polynomial \(f_i\), where \(f_i\) has no repeated roots. 
\end{lemma}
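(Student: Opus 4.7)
The plan is to decompose $V$ using the $\FF_\ell[g]$-module structure (available since $g$ is semisimple) and then refine this decomposition so that the restricted characteristic polynomial on each piece becomes squarefree.

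Let $m_g(t) = \prod_i p_i(t)$ be the factorization of the minimal polynomial of $g$ into distinct irreducibles in $\FF_\ell[t]$, and let $V = \bigoplus_i V_{p_i}$ with $V_{p_i} := \ker p_i(g)$ be the corresponding primary decomposition. The reciprocal polynomial $p^*(t) := t^{\deg p} p(1/t)/p(0)$ defines an involution on the set of irreducible factors; by the preceding proposition, applied to the generalized eigenspaces of $V \otimes \overline{\FF_\ell}$, we have $\omega(V_{p_i}, V_{p_j}) = 0$ whenever $p_j \neq p_i^*$. Consequently we obtain the orthogonal decomposition
\[V = S_1 \oplus S_{-1} \oplus \bigoplus_{\{p, p^*\}:\, p \neq p^*}(V_p \oplus V_{p^*}) \oplus \bigoplus_{p = p^* \neq t \pm 1} V_p,\]
in which each displayed summand is a $g$-invariant symplectic subspace.

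It then remains to refine each summand other than $S_{\pm 1}$ so that the restricted characteristic polynomial is squarefree. For a non-self-reciprocal pair $\{p, p^*\}$: the semisimplicity of $g$ yields a splitting $V_p = \bigoplus_{j=1}^{m_p} L_j$ into irreducible $\FF_\ell[g]$-submodules $L_j \cong \FF_\ell[t]/p$; the pairing identifies $V_{p^*}$ with the $\FF_\ell[g]$-dual of $V_p$ and produces a dual splitting $V_{p^*} = \bigoplus_j L_j'$ with $\omega(L_j, L_k') = 0$ for $j \neq k$. Each $L_j \oplus L_j'$ is then a $g$-invariant symplectic subspace with squarefree characteristic polynomial $p \cdot p^*$.

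The main obstacle is the self-reciprocal case $p = p^* \neq t \pm 1$. Here the degree $d := \deg p$ is necessarily even, because the map $\lambda \mapsto \lambda^{-1}$ on roots of $p$ is an order-$2$ element of $\Gal(\FF_{\ell^d}/\FF_\ell) \cong \ZZ/d\ZZ$; denote this Galois involution by $\sigma$. The $\FF_\ell[g]$-structure makes $V_p$ an $\FF_{\ell^d}$-vector space of dimension $m_p$, on which the $g$-invariance identity $\omega(\alpha u, v) = \omega(u, \sigma(\alpha) v)$ for $\alpha \in \FF_{\ell^d}$, together with non-degeneracy of the trace pairing $\mathrm{tr}_{\FF_{\ell^d}/\FF_\ell}$, forces $\omega$ to have the form $\omega = \mathrm{tr}_{\FF_{\ell^d}/\FF_\ell} \circ h$ for a unique $\sigma$-sesquilinear form $h: V_p \times V_p \to \FF_{\ell^d}$. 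The alternating property of $\omega$ makes $h$ anti-Hermitian, and scaling $h$ by any $\delta \in \FF_{\ell^d}^\times$ with $\sigma(\delta) = -\delta$ converts it into a non-degenerate Hermitian form. Since Hermitian forms over finite fields always admit an orthogonal basis, we obtain $V_p = \bigoplus_{j=1}^{m_p} \FF_{\ell^d} \cdot v_j$; each $\FF_{\ell^d} \cdot v_j$ is then a $g$-invariant symplectic subspace of $\FF_\ell$-dimension $d$ with irreducible characteristic polynomial $p$. Assembling all of these refined pieces produces the claimed decomposition.
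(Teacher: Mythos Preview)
Your argument is correct and takes a genuinely different route from the paper. The paper reaches the same primary decomposition into pieces indexed by reciprocal pairs of irreducible factors, but then refines each piece $W_i$ (with characteristic polynomial $f_i^{k_i}$) indirectly: it writes down a block-diagonal model $T\oplus\cdots\oplus T$ that is symplectic and has the right characteristic polynomial, and then invokes Wall's result that two semisimple elements of $\Sp(V)$ which are $\GL(V)$-conjugate are already $\Sp(V)$-conjugate, transporting the obvious splitting of the model back to $W_i$. Your approach instead refines each $W_i$ intrinsically: for a non-self-reciprocal pair $\{p,p^*\}$ you use the duality $V_{p^*}\cong V_p^\vee$ induced by $\omega$ to split $V_{p^*}$ compatibly with any $\FF_\ell[g]$-splitting of $V_p$; for a self-reciprocal $p$ you promote $\omega$ to a skew-Hermitian form over $\FF_{\ell^d}$ and diagonalise it. The payoff of your version is that it is self-contained and constructive, avoiding the external citation to Wall (and also sidestepping the paper's tacit assumption that a symplectic matrix with characteristic polynomial $f_i$ exists in the model construction). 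The paper's version has the advantage of being shorter once one is willing to quote Wall, and it does not need the Hermitian-form machinery. Either way the output is the same decomposition.
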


\begin{proof}
	For an irreducible monic polynomial \(h(t)\), let \(h^-(t)\defeq t^{\deg h} h(1/t)\) denote the irreducible polynomial with roots being the reciprocals of those of \(h(t)\). 

	Since roots of \(f\) come in pairs of \(\lambda,\lambda^{-1}\) with equal multiplicity, then for each irreducible factor \(h(t)\) of \(f(t)\), either \(h^-(t)=h(t)\), or \(h^-(t)\) must also be an irreducible factor of \(f(t)\) with the same multiplicity as \(h(t)\). 

	Thus we can write \[f(t)=(t+1)^a(t-1)^bf_1(t)^{k_1}f_2(t)^{k_2}\dots f_s(t)^{k_s}.\]
	Where each \(f_i\) is either an irreducible factor satisfying \(f_i(t)=f_i^-(t)\), or \(f_i\) is a product of two irreducible factors of the form \(h(t)h^-(t)\). In both cases \(f_i(t)\) has no repeated roots, since finite extensions of \(\FF\) are separable.
	Thus we can decompose \(V\) into \(g\)-invariant subspaces \[V=S_1\oplus S_{-1}\oplus W_1\oplus \cdots \oplus W_s\]

	where \(W_i\) is the vector subspace of \(V\) corresponding to \(f_i(t)^{k_i}\). Then over the algebraic closure \(W_i\otimes \bar{\FF}\) is a symplectic subspace of \(V\otimes \bar{\FF}\) since it is a direct sum of \(S_{\lambda}\oplus S_{\lambda^{-1}}\) for every pair of roots \(\lambda,\lambda^{-1}\) of \(f\). Thus \(W_i\) is also a symplectic subspace of \(V\). 

	Thus it suffices to show that each \(W_i\) can be further decomposed into \(W_{i1}\oplus \cdots \oplus W_{il}\) where each \(W_{ij}\) is a \(g\)-invariant symplectic subspaces of \(W_i\). 
	We know that the restricted action \(g|_{W_i}\) has characteristic polynomial \(f_i^{k_i}\).
	Let \(T\) be a matrix with characteristic polynomial \(f_i\) that is also symplectic with respect to a symplectic form \(\varpi\), then the block diagonal matrix
	\[\begin{pmatrix}
		T\\ &T \\ && \ddots \\ &&&T
	\end{pmatrix}\]
	with \(k_i\) diagonal blocks will have characteristic polynomial \(f_i^{k_i}\). This matrix is also symplectic with respect to the symplectic form \(\varpi^{\otimes k_i}\). 

	Applying a suitable change of basis, we can obtain a matrix \(U\) that is symplectic with respect to the symplectic form on \(W_i\). 

	Finally, the summary in \cite[p.7]{WallSemisimpleConjugacyClasses} states that any two semisimple elements in \(\Sp(V)\) which are conjugate over \(\GL(V)\) are also conjugate over \(\Sp(V)\). Since both \(U\) and \(g|_{W_i}\) are semisimple with the same characteristic polynomial, this means that under a suitable basis \(g|_{W_i}\) has matrix \(U\). It is clear from construction that the matrix \(U\) decomposes into \(g\)-invariant symplectic subspaces each with characteristic polynomial \(f_i\), so we are done.
\end{proof}

\subsection{Evaluating the Trace}\label{sec:Trace_Sum_Calculation}

\begin{lemma} \label{lem:Trace_sum_of_spaces}
	Let \(g\in \Sp(V)\) be a symplectomorphism. Suppose \(V\) decomposes into a direct sum of \(g\)-invariant symplectic subspaces	\[V=V_1\oplus V_2\oplus\cdots\oplus V_r\]
	Let \(\mathfrak H_i= \mathfrak H(V_i)\) be the unique representation of \(V_i\) with identity central character as described in \cref{sec:Heisenberg_Rep}. Then
	\[\tr(g|\mathfrak H(V))=\prod_{i=1}^r\tr(g|\mathfrak H_i).\]
\end{lemma}

\begin{proof}
	This follows directly from \cite[Prop 2.14]{GHQuantizationSymplecticVector} which asserts that
	\[\mathfrak H(V)=\bigotimes_{i=0}^r \mathfrak H_i\]
	and the fact that the trace on a tensor product of vector spaces is equal to the product of the traces on each individual component. 
\end{proof}

\begin{lemma}\label{thm:trace_separable_char_poly}
	Suppose that \(g\in \Sp(V)\) is a semi-simple symplectomorphism which has characteristic polynomial \(f(t)\) with no repeated roots, then
	\[\tr(g|\fH(V))=\legen{(-1)^{n}f(1)}{\FF_\ell},\]
	where \(\dim V=2n\). 
\end{lemma}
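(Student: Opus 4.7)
The plan is to decompose $V$ into $g$-invariant symplectic subspaces, apply multiplicativity of the trace via \cref{lem:Trace_sum_of_spaces}, and compute on each piece. First observe that no repeated roots of $f$, combined with \cref{prop:eval_multiplicity}, forces $\pm 1$ not to be eigenvalues of $g$ (they would otherwise appear with multiplicity at least $2$); in particular $g-I$ is invertible and $V^g = 0$. Following the proof of \cref{thm:V_Decomposition} (with no $S_{\pm 1}$ summand), factor $f = f_1 \cdots f_r$ where each $f_i$ is either \textbf{(b)} a product $h_i h_i^-$ of two distinct non-palindromic irreducibles, or \textbf{(a)} an irreducible palindromic polynomial, yielding $V = \bigoplus_i V_i$ with $\dim V_i = 2 d_i$.

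In case \textbf{(b)}, take $M_i := \ker h_i(g) \subset V_i$. This is $g$-invariant and isotropic: its base change decomposes as $\bigoplus_\lambda S_\lambda$ over roots $\lambda$ of $h_i$, and for any two such roots one has $\lambda\mu \ne 1$ (since $\lambda^{-1}$ is a root of $h_i^-$, disjoint from the roots of $h_i$), so the preceding proposition on generalized eigenspaces makes these summands pairwise orthogonal and individually isotropic. Since $\dim M_i = d_i = \tfrac12 \dim V_i$, it is Lagrangian. By \cref{thm:Trace_InvLag} and $V_i^g = 0$,
\[\tr(g \mid \fH(V_i)) = \legen{\det(g|M_i)}{\FF_\ell}.\]
A direct computation with eigenvalue pairs $(\lambda_j, \lambda_j^{-1})$ gives $f_i(1) = (-1)^{d_i} \det(I - g|M_i)^2 / \det(g|M_i)$, so $(-1)^{d_i} f_i(1) \cdot \det(g|M_i)$ is a square in $\FF_\ell$, yielding $\legen{\det(g|M_i)}{\FF_\ell} = \legen{(-1)^{d_i} f_i(1)}{\FF_\ell}$.

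In case \textbf{(a)}, $V_i$ is irreducible as an $\FF_\ell[g]$-module and so admits no proper $g$-invariant subspace. Here I apply \cref{thm:Trace_Any_Lagrangian} to any Lagrangian $M_i \subset V_i$ transverse to its image $gM_i$ (generic, since no $g$-invariant Lagrangian exists), with complement $M_i' = gM_i$. Then $\mathcal{S} = gM_i$, and the formula reduces to an $\FF_\ell$-Gauss sum in the quadratic form $Q(m) = \tfrac12 \omega(\alpha(m) + \beta(m), gm)$ on $M_i$, where $\alpha(m), \beta(m) \in M_i$ are determined uniquely by $g(g-I)m = \alpha(m) + g\beta(m)$. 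Evaluating this Gauss sum against the normalization $A_{M_i^\circ, gM_i^\circ}$, and identifying the discriminant of $Q$ with $(-1)^{d_i} f_i(1)$ modulo squares (again by eigenvalue considerations after base change to $\bar\FF_\ell$, where one has access to the paired eigenvalues $\lambda, \lambda^{-1}$), yields $\tr(g \mid \fH(V_i)) = \legen{(-1)^{d_i} f_i(1)}{\FF_\ell}$. Multiplying across all $i$ and using $\sum d_i = n$ then proves the lemma.

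The main obstacle is case \textbf{(a)}: without a $g$-invariant Lagrangian one cannot directly invoke \cref{thm:Trace_InvLag}, so the Gauss sum produced by \cref{thm:Trace_Any_Lagrangian} must be computed by hand and its discriminant identified with $(-1)^{d_i} f_i(1)$ modulo squares. This essentially reproduces the case (b) determinant identity in a "$g$-twisted" setting where the Lagrangian fails to be invariant and the twisting operators $\alpha, \beta$ encode the obstruction.
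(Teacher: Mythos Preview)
Your decomposition strategy differs substantially from the paper's, which does \emph{not} split $V$ at all in this lemma. Instead, the paper exhibits a single explicit symplectic ``companion matrix'' for $g$ on the whole of $V$ (possible because any two semisimple symplectic matrices with the same separable characteristic polynomial are $\Sp(V)$-conjugate), chooses the standard Lagrangian $M=\langle v_1,\dots,v_n\rangle$, and observes that in this model $M\cap gM$ has codimension~$1$ in $M$. Consequently the set $\mathcal S$ of \cref{thm:Trace_Any_Lagrangian} is \emph{one}-dimensional, the sum is a classical one-variable Gauss sum with coefficient $-\tfrac12 f(1)$, and the normalisation constant $A_{M^\circ,gM^\circ}$ involves only $n_I=1$. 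All signs are then tracked by hand.

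Your case~(b) is correct and gives a pleasant conceptual alternative when $f_i=h_ih_i^{-}$ splits: the invariant Lagrangian $\ker h_i(g)$ lets you invoke \cref{thm:Trace_InvLag} directly, and your determinant identity $(-1)^{d_i}f_i(1)\det(g|M_i)=\det(I-g|M_i)^2$ is exactly right.

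The gap is in case~(a). Once you take $M_i$ transverse to $gM_i$, you face a $d_i$-variable Gauss sum, and the normalisation constant now carries $n_I=d_i$ together with the term $\omega_\wedge(o_{gM_i/I},o_{M_i/I})$, which you do not address. Your claim that the discriminant of $Q(m)=\tfrac12\omega(\alpha(m)+\beta(m),gm)$ equals $(-1)^{d_i}f_i(1)$ modulo squares ``by eigenvalue considerations after base change'' is precisely where all the content lies, and it is not justified: after base change $M_i\otimes\bar\FF_\ell$ is \emph{not} a sum of eigenspaces of $g$ (it is an arbitrary Lagrangian unrelated to the eigenspace decomposition), so there is no direct way to read off $\det Q$ from the eigenvalue pairs $(\lambda,\lambda^{-1})$. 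You would also need to show that the product of $\det Q$, the $\omega_\wedge$-term, and the various Gauss-sum powers assembles to the stated Legendre symbol, and none of this is automatic. Finally, the existence of a Lagrangian $M_i$ with $M_i\cap gM_i=0$ is asserted as ``generic'' but not proved over $\FF_\ell$; the absence of a $g$-invariant Lagrangian does not by itself guarantee a transverse one. The paper's companion-matrix model sidesteps all of this by forcing $\mathcal S$ to be a single line.
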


\begin{proof}
	First we note that since we have assumed that \(f(t)\) has no repeated roots, this also means that \(g\) does not have any eigenvalues equal to \(\pm1\). This is because by \cref{prop:eval_multiplicity} any eigenvalue of \(\pm1\) must occur with even multiplicity. 

	Fix a standard symplectic basis of \(V\) and let \(S\) denote the matrix representing \(g\) in this basis. Since \(f\) has no repeated roots, this means that any matrix in \(T\in\GL(V)\) with the same characteristic polynomial \(f\) will be similar to \(S\) (since neither \(S\) nor \(T\) will have any Jordan blocks as every eigenvalue has multiplicity 1). 

	Furthermore, by the summary in \cite[p. 7]{WallSemisimpleConjugacyClasses}, it is shown that the \(\GL(V)\) conjugacy class of \(S\) intersects \(\Sp(V)\) at a unique conjugacy class. This means that any symplectic matrix \(T\in \Sp(V)\) with the same characteristic polynomial as \(g\) will be similar to \(S\) via a symplectomorphism.

	Now suppose \(g\) has characteristic polynomial
	\[f(t)=t^{2n}+a_1t^{2n-1}+\cdots+a_{n-1}t^{n+1} +a_{n}t^{n}+a_{n-1}t^{n-1}+\cdots+a_1t+1\]
	and construct the block matrix
	\[S=\begin{pmatrix}
		A&B\\0&C
	\end{pmatrix}\]

	Where \(A\) and \(B\) have dimension \(n+1\times n\), \(C\) has dimension \(n-1\times n\) and they are

	\[A=\begin{pmatrix}
		0&0&0&\cdots
		\\1&0&0&\cdots
		\\0&1&0&\cdots
		\\0&0&1&\cdots
		\\\vdots&\vdots&\vdots&\ddots
	\end{pmatrix},
	B=\begin{pmatrix}
		0&0&0&\cdots& -1
		\\0&0&0&\cdots& -a_1
		\\0&0&0&\cdots& -a_2
		\\\vdots&\vdots&\vdots&\vdots&\vdots
		\\0&0&0&\cdots& -a_{n-1}
		\\-a_1&-a_2&-a_3&\cdots& -a_n
	\end{pmatrix},
	C=\begin{pmatrix}
		1&0&0&0&\cdots
		\\0&1&0&0&\cdots
		\\0&0&1&0&\cdots
		\\\vdots&\vdots&\vdots&\ddots
	\end{pmatrix}.\]

	It can be checked by a straightforward computation that \(\chi_S(t)=f(t)\) and that \(S\) preserves the standard symplectic form. Thus under suitable symplectic change of basis, \(g\) is represented by the matrix \(S\). 

	Let \(v_1,\dots,v_n,w_1,\dots,w_n\) be this standard symplectic basis, then explicitly \(g\) is the linear map
	\[g\colon\begin{cases}
		v_i \mapsto v_{i+1} & i\neq n
		\\v_n \mapsto w_1
		\\w_i \mapsto -a_iw_1+w_{i+1}& i\neq n
		\\w_n \mapsto -(v_1+a_1v_2+a_2v_3+\cdots+a_{n-1}v_n+a_nw_1).
	\end{cases}\]

	Now that we have a symplectic basis of \(g\), we fix the Lagrangian \(M\) to be the span of \(v_1,\dots,v_n\), the complement \(M'\) to be the span of \(w_1,\dots,w_n\) and we apply \cref{thm:Trace_Any_Lagrangian} to calculate the trace of \(g\). 

	We first determine the set \(\cS = \{x\in M'| gx-x\in M+gM\}\). Noting that \(M+gM=M+\FF_\ell w_1\), we let \(x=\alpha_1w_1+\dots+\alpha_n w_n\) and we require that \(gx-x\) have no \(w_2\cdots w_n\) terms. For each \(2\leq i\leq n\), the coefficient of \(w_i\) in \(x-gx\) being zero implies that \(\alpha_i=\alpha_{i-1}\). Thus \(\cS\) is the 1 dimensional subspace spanned by \(w_1+w_2+\dots+w_n\). 

	Letting \(x=w_1+w_2+\dots+w_n\), we compute that
	\[\begin{aligned}
		gx-x&=-(v_1+a_1v_2+a_2v_3+\cdots+a_{n-1}v_n)-(a_1+a_2+\cdots+a_n+1)w_1
		\\&=-(v_1+a_1v_2+a_2v_3+\cdots+a_{n-1}v_n)-g[(a_1+a_2+\cdots+a_n+1)v_n] 
	\end{aligned}.\]

	Thus we can pick \(m_x=-(v_1+a_1v_2+a_2v_3+\cdots+a_{n-1}v_n)\) and \(n_x=-(a_1+a_2+\cdots+a_n+1)v_n\). Then we see that
	\[\begin{aligned}
		\omega(m_x+n_x,x)&=-(a_n+2a_{n-1}+2a_{n-2}+\cdots+2a_{1}+2)
		\\&=-f(1)
	\end{aligned}.\]
	Where \(f(1)\) is the sum of the coefficients of the characteristic polynomial of \(g\). 

	Thus we can compute the sum
	\[\begin{aligned}
	\sum_{x\in \cS}\psi\left(\frac12\omega(m_x+n_x,x)\right)&=\sum_{x\in \cS}\psi\left(\frac12\omega(m_x+n_x,x)\right)
	\\&=\sum_{k\in \FF_\ell}\psi\left(\frac{-f(1)}{2}k^2\right)
	\\&=\legen{-\frac{1}{2}f(1)}{\FF_\ell}G(1,\ell)
	\end{aligned}.\]
	Where the last equality is from \cite[p.85]{LangAlgebraicNumberTheory}

	Finally we compute \(A_{M^\circ\!,gM^\circ}\). We fix the orientation 
	\[o_{M}=v_1\wedge v_2\wedge\dots\wedge v_n\] and so 
	\[o_{gM}=g(o_M)=v_2\wedge \cdots\wedge v_n\wedge w_1=(-1)^{n-1}w_1\wedge v_2\wedge \cdots\wedge v_n.\]

	Then we can decompose the orientations via \(\iota_{M}=\iota_{gM}=v_2\wedge \cdots\wedge v_n\), and \(o_{M/I}=v_1\), \(o_{gM/I}=(-1)^{n-1}w_1\). We note that \(n_I=1\) and so
	\[\begin{aligned}
		A_{M^\circ,gM^\circ} &= (G(\tfrac{1}{2},\ell)/\ell)^{n_I} \left(\frac{(-1)^{\binom{n_I}{2}} \frac{\iota_{gM}}{\iota_M}\cdot \omega_\wedge (o_{gM/I}, o_{M/I})}{\FF_\ell}\right)
		\\&=(G(\tfrac{1}{2},\ell)/\ell)\left(\frac{(-1)^{n-1}\omega(w_1,v_1)}{\FF_\ell}\right)
		\\&=\frac{1}{\ell}G(\tfrac12,\ell)\legen{(-1)^n}{\FF_\ell}
		\\&=\frac{1}{\ell}G(1,\ell)\legen{(-1)^n\frac12}{\FF_\ell}
	\end{aligned}.\]

	Thus finally by \cref{thm:Trace_Any_Lagrangian} we have that
	\[\begin{aligned}
	 \tr(g|\fH(V))=\tr(g|C_{M^\circ})&=A_{M^\circ\!,gM^\circ} \cdot \sum_{x\in \mathcal S} \psi\left(\frac12\omega(m_x+n_x,x)\right)
	 \\&=\frac{1}{\ell}G(1,\ell)\legen{-\frac12}{\FF_\ell} \cdot \legen{(-1)^n\frac{1}{2}f(1)}{\FF_\ell}G(1,\ell)
	 \\&=\frac{1}{\ell}G(1,\ell)^2\legen{(-1)^{n-1}f(1)}{\FF_\ell}
	 \\&=\legen{-1}{\FF_\ell}\legen{(-1)^{n-1}f(1)}{\FF_\ell}
	 \\&=\legen{(-1)^{n}f(1)}{\FF_\ell}.
	\end{aligned}\]

	Where the second last equality is due to \cite[p. 87]{LangAlgebraicNumberTheory}, which asserts that
	\[G(1,\ell)= \begin{cases}
		\sqrt{\ell} & \ell\equiv 1\pmod 4
		\\i\sqrt{\ell}& \ell\equiv 3\pmod 4.
	\end{cases}\]
\end{proof}

\begin{theorem} \label{thm:Trace_Char_Poly}
	Suppose \(g\in \Sp(V)\) is semisimple with minimal polynomial \(f\) and \(\dim V=2n\). Then
	\[\tr(g|\fH(V))=\legen{(-1)^{n-(\dim V^g/2)}\det'(1-g)}{\FF_\ell}\sqrt{|V^g|},\]
	where \(\det'\) is the \textit{regularised determinant}.
\end{theorem}
	
\begin{proof}
	By \cref{thm:V_Decomposition} decompose \(V\) as a direct sum of \(g\)-invariant symplectic subspaces
	\[V=S_1\oplus S_{-1}\oplus V_1\oplus \cdots \oplus V_r.\]
	Where \(S_{\pm1}\) is the \(\pm 1\) eigenspace of \(g\) respectively, and \(V_i\) are subspaces such that the restriction \(g|_{V_i}\) of the \(g\)-action to \(V_i\) has characteristic polynomial \(f_i\), where \(f_i\) has no repeated roots. 

	Then by \cref{lem:Trace_sum_of_spaces} we can write
	\[\tr(g|\fH(V))=\tr(g|\mathfrak H(S_1))\tr(g|\mathfrak H(S_{-1}))\prod_{i=1}^r\tr(g|\mathfrak H(V_i)).\]

	Since \(g\) acts as \(\Id\) and \(-\Id\) on \(S_1\) and \(S_{-1}\) respectively, it is easy to compute the traces using \cref{Cor: Invariant Lagrangian} by taking any invariant Lagrangian
	\[\tr(g|\mathfrak H(S_1))=\sqrt{|S_1|}=\sqrt{|V^g|},\]
	and
	\[\tr(g|\mathfrak H(S_{-1}))=\legen{(-1)^{\dim S_{-1}/2}}{\FF_\ell}.\]
	
	In particular, note that since \(S_{-1}\) is even dimensional, the characteristic polynomial of \(g\) restricted to \(S_{-1}\) is an even power of \((t+1)\), which in particular means that if we denote \(f_{-1}\) as the characteristic polynomial of \(g\) on \(S_{-1}\), then we can write instead
	
	\[\tr(g|\mathfrak H(S_{-1}))=\legen{(-1)^{\dim S_{-1}/2}f_{-1}(1)}{\FF_\ell}.\]

	Combining this with \cref{thm:trace_separable_char_poly} we obtain
	\[\begin{aligned}
		\tr(g|\fH(V))&=\sqrt{|V^g|}\cdot \legen{(-1)^{\dim S_{-1}/2}f_{-1}(1)}{\FF_\ell}\cdot\prod_{i=1}^r\legen{(-1)^{(\dim V_i)/2}f_i(1)}{\FF_\ell}
		\\&=\sqrt{|V^g|}\legen{(-1)^{(\dim S_{-1}+\dim V_1+\cdots \dim V_r)/2}f_{-1}f_1f_2\dots f_r(1)}{\FF_\ell}
	\end{aligned}.\]

	Note that \(\dim S_{-1}+\dim V_1+\cdots \dim V_r =2n-\dim S_1=2n-\dim V^g\), since the 1-eigenspace \(S_1\) is precisely the \(g\) fixed points \(V^g\). 

	Finally, we note that the kernel of \(1-g\) is precisely the 1-eigenspace \(S_1\), so the characteristic polynomial of \(1-g\) on the quotient \(V/S_1\) is precisely \(f_{-1}f_1f_2\dots f_r(1+t)\). Thus we have the regularised determinant \(\det'(1-g)\) is equal to \(f_{-1}f_1f_2\dots f_r(1)\) and so we have 
	\[\tr(g|\fH(V))=\legen{(-1)^{n-(\dim V^g/2)}\det'(1-g)}{\FF_\ell}\sqrt{|V^g|},\]

	as desired.
\end{proof}

Finally we prove the main theorem of this section.

\begin{proof}[Proof of \cref{thm:TraceFormula}]
	By \cref{thm:UniqueRep} and the Stone-von Neumann property, we know that \(\cH\) is isomorphic to \(\fH(J[\ell])\). Thus
	\[\tr(\Fr_q | \cH) = \tr(\Fr_q|\fH(J[\ell])).\]
	Then applying \cref{thm:Trace_Char_Poly} and noting that the \(\Fr_q\) fixed points of \(J[\ell]\) is precisely \(J[\ell](\FF_q)\) we obtain our desired result.
\end{proof}

\section{An Arithmetic Path Integral} \label{sec:Arithmetic_PathIntegrals}
\subsection{Defining an Arithmetic Action} \label{sec:Arithmetic_PathIntegrals_Def}

As mentioned in the introduction, for \(J\) the Jacobian of \(X\), there is a diagram
\[\begin{tikzcd}
	{\bar{J}[\ell]} & J[\ell] \\
	{\Spec(\bar{\FF}_q)} & {\Spec(\FF_q).}
	\arrow[hook, from=1-1, to=1-2]
	\arrow[from=1-1, to=2-1]
	\arrow[from=1-2, to=2-2]
	\arrow[from=2-1, to=2-2]
\end{tikzcd}\]
We can consider the finite group scheme of \(\ell\)-torsion points of the Jacobian, \(J[\ell]\), to be a manifold fibered over the circle \(\Spec(\FF_q)\) with finite fibres.

We can then view a rational \(\ell\)-torsion point \(\gamma\in J[\ell](\FF_q)\) as a section over this `fibre bundle over \(S^1\)'
\[\begin{tikzcd}
	{J[\ell]} \\
	{\Spec(\FF_q).}
	\arrow[shift right=2, from=1-1, to=2-1]
	\arrow["\gamma"', shift right=2, from=2-1, to=1-1]
\end{tikzcd}\]
We take \(J[\ell]\) to be the arithmetic analogue of the phase space. Our arithmetic pairing will be of the form
\[A\colon J[\ell](\FF_q)\times J[\ell](\FF_q) \rightarrow \frac{1}{\ell}\ZZZZ,\]
with \(A(\gamma)\defeq A(\gamma,\gamma)\).

As a sum over the space \(J[\ell](\FF_q)\), this can be viewed as being analogous to the space of all loops in the phase space. However, the integral described in \cref{sec:PhysicsBackground} is over not just closed loops, but paths where just the position co-ordinates are periodic. It seems our arithmetic result is an analogue of the case where the integral over the phase space loops is equal to the entire integral. We are not entirely sure when this holds, but it seems related to the exactness of the semi-classical approximation. 

Let \(\CH_0(X)\simeq \Pic(X)\) denote the Chow group, then by geometric class field theory there is a reciprocity map
\[\operatorname{Rec}\colon\CH_0(X) \rightarrow \pi_1^{\ab}(X).\]

Let \(\CH_0(X)^0\simeq \Pic^0(X)\simeq J(\FF_q)\) denote the subgroup of \(\CH_0\) consisting of degree 0 algebraic cycles. Then via the commutative diagram

\[\begin{tikzcd}[cramped]
	0 & {\CH_0(X)^0} & {\CH_0} & \ZZ & 0 \\
	0 & {\pi_1^{\ab}(X)^0=\pi_1^{\ab}(\bar{X})} & {\pi_1^{\ab}(X)} & {\hat\ZZ}\simeq \Gal(\bar{\FF}_q/\FF_q) & 0
	\arrow[from=1-1, to=1-2]
	\arrow[from=1-2, to=1-3]
	\arrow[from=1-2, to=2-2]
	\arrow["\deg", from=1-3, to=1-4]
	\arrow["{\operatorname{Rec}}"', from=1-3, to=2-3]
	\arrow[from=1-4, to=1-5]
	\arrow[from=1-4, to=2-4]
	\arrow[from=2-1, to=2-2]
	\arrow[from=2-2, to=2-3]
	\arrow[from=2-3, to=2-4]
	\arrow[from=2-4, to=2-5]
\end{tikzcd}.\]

The reciprocity map restricts to
\[\operatorname{Rec}\colon J(\FF_q)=\CH_0(X)^0 \rightarrow \pi_1^{\ab}(X)^0,\]
and also to the \(\ell\)-torsion of both groups
\[\operatorname{Rec}\colon J[\ell](\FF_q)=\CH_0(X)^0[\ell] \rightarrow \pi_1^{\ab}(X)^0[\ell].\]

On the other hand, an \(\ell\)-torsion point \(\gamma\in J[\ell](\FF_q)\) defines a line bundle \(L_\gamma\) over \(X\) such that \((L_\gamma)^{\otimes\ell} \simeq \mathcal O_X\).

Suppose we fix an isomorphism \(f\colon (L_\gamma)^{\otimes\ell}\simto \OX\). Since we assumed that \(\mu_\ell\subset \FF_q\), fix an isomorphism between \(\mu_\ell\) and \(\fZZZZ{\ell}\) and identify the two. 
Then we can define a \(\fZZZZ{\ell}=\mu_\ell\) torsor \(c_{\gamma,f}\) via the construction
\[c_{\gamma,f}(U) \defeq \left\{y\in \Gamma(L_\gamma,U) : f(y^{\otimes \ell})= 1\right\}\]

for any \'etale map \(U\rightarrow X\). This torsor defines a class in \(c_{\gamma,f}\in H^1\left(X, \frac{1}{\ell}\ZZZZ \right)\). 

\begin{lemma} \label{thm:TorsorAddition}
	Given two torsion points \(\beta,\gamma \in J[\ell](\FF_q)\) and isomorphisms \(g\colon (L_\gamma)^{\otimes \ell} \simrightarrow \OX\), \(f\colon (L_\beta)^{\otimes \ell} \simrightarrow \OX\), be chosen isomorphisms of line bundles respectively. Then \(L_{\beta+\gamma}=L_\beta\otimes L_\gamma\), and as classes in \(H^1\left(X, \frac{1}{\ell}\ZZZZ \right)\) the addition of torsors is given by
	\[c_{\beta,f}+c_{\gamma,g}= c_{\beta+\gamma, g\otimes f}.\]
\end{lemma}

\begin{proof}
	\(L_{\beta+\gamma}=L_\beta\otimes L_\gamma\) follows from the fact that the group law on the Jacobian is the same as the group law on the Picard group, which is the tensor product of line bundles. 

	In order to add the two torsors \(c_{\beta,g}\) and \(c_{\gamma,f}\), we first take the product sheaf \(c_{\beta,f}\times c_{\gamma,f}\) which is a \(\fZZZZ{\ell}\times\fZZZZ{\ell}\) torsor, then we pushout along the summation map \(\fZZZZ{\ell}\times\fZZZZ{\ell} \xrightarrow{+} \fZZZZ{\ell}\) to obtain the torsor \(c_{\beta,g}+c_{\gamma,f}\). For any \'etale \(U\to X\), this pushout identifies \((y,z)\in c_{\beta,g}\times c_{\gamma,f}(U)\) with \((\zeta y,\zeta^{-1}z)\), for any root of unity \(\zeta\in \mu_\ell\). 

	This sheaf is clearly the same as the torsor \(c_{\beta+\gamma, g\otimes f}\), and thus we conclude that \(c_{\beta,g}+c_{\gamma,f}= c_{\beta+\gamma, g\otimes f}\).
\end{proof}

\begin{corollary} \label{cor:Jac_to_H1}
	Any two isomorphisms \(f,f'\colon (L_\gamma)^{\otimes\ell} \simrightarrow  \OX\) will differ only by the scaling of a constant \(a\in \FF_q^\times\), and thus the torsors \(c_{\gamma,f}\) and \(c_{\gamma,f'}\) will differ by an element in \(H^1\left(\Spec(\FF_q), \frac{1}{\ell} \ZZZZ \right)\), and so an \(\ell\)-torsion point \(\gamma\) defines a class in
	\[c_\gamma \in \frac{H^1\left(X, \frac{1}{\ell}\ZZZZ \right)}{H^1\left(\Spec(\FF_q), \frac{1}{\ell} \ZZZZ \right)}.\]		
\end{corollary}

\begin{proof}
	This follows directly from the previous lemma by taking \(\beta=0\) and \(g=a\) being the scaling map. 
\end{proof}

\begin{remark} \label{rmk:Kummer_iso}
Note that by the Kummer exact sequence there is an exact sequence of cohomology
\[0\rightarrow\FF_q^\times/\FF_q^{\times \ell}\xto{\kappa} H^1(X,\mu_\ell) \rightarrow H^1(X,\Gm)[\ell]\rightarrow 0,\]
and \(H^1(X,\Gm)[\ell]= \Pic(X)[\ell]=\Pic^0(X)[\ell]=J[\ell](\FF_q)\), and thus there is a surjective map given by pushing out a \(\mu_\ell\) torsor along the map \(\mu_\ell\to\Gm\) to obtain a \(\Gm\) torsor
\[s\colon H^1(X,\mu_\ell)\twoheadrightarrow J[\ell](\FF_q).\]

Quotienting by the kernel, the inverse of the map
\[\bar{s}\colon\frac{H^1(X,\mu_\ell)}{\im(\kappa)} \simrightarrow J[\ell](\FF_q)\]
is precisely the map \(\gamma\mapsto c_\gamma\) defined above.
\end{remark}

Finally, since there is an isomorphism
\[H^1\left(X, \tfrac{1}{\ell}\ZZZZ \right) = \Hom\left(\pi^{\ab}_1(X), \tfrac{1}{\ell}\ZZZZ \right)\]
we define the pairing

\begin{definition}\label{def:A_Pairing}
	Define \(A\) to be the pairing of elements in \(\CH_0(X)^0[\ell]=J[\ell](\FF_q)\) given by
	\[A(\gamma, \beta) \defeq c_\gamma(\operatorname{Rec}(\beta)),\]
	where we view the torsor \(c_\gamma\) as a homomorphism from \(\pi^{\ab}_1(X)\) to \(\fZZZZ{\ell} \). Noting that the image of the reciprocity map lies inside the kernel of \(\pi_1(X)\rightarrow \Gal(\bar{\FF}_q/\FF_q)\). Thus this is a well-defined function
	\[A\colon J[\ell](\FF_q)\times J[\ell](\FF_q) \rightarrow \fZZZZ{\ell}.\]
\end{definition}

\begin{proposition}\label{prop:A_nondegen_bilinear}
	Suppose \(J(\FF_q)\) has no points of order \(\ell^2\), then the pairing \(A\) is non-degenerate bilinear form on the \(\FF_\ell\) vector space \(J[\ell](\FF_q)\).
\end{proposition}
\begin{proof}
	The linearity of the first argument follows from \cref{thm:TorsorAddition}. 
	The linearity of the second argument follows from the linearity of the reciprocity map and \(\Hom\). 

	Fix a non-zero \(\beta\in J[\ell](\FF_q)\), since \(J(\FF_q)\) has no points of order \(\ell^2\), it follows that \(\beta\) is not \(\ell\)-divisible. Since the reciprocity map \(\Rec\colon J(\FF_q)\to \pi_1^\ab(X)^0\) is an isomorphism, \(\Rec(\beta)\) is an \(\ell\)-torsion point of \(\pi_1^\ab(X)\) that is also not \(\ell\)-divisible. 
	There must exist a homomorphism \(\phi\in \Hom(\pi_1^\ab(X), \tZZZZ{\ell})\) such that \(\phi(\Rec(\beta))\neq 0\). Since \(\Hom(\pi_1^\ab(X), \tZZZZ{\ell})=H^1(\pi_1^\ab(X), \tZZZZ{\ell})\) surjects onto \(J[\ell](\FF_q)\), there must exist a \(\gamma\) such that \(c_\gamma(\Rec(\beta))\neq 0\), so \(A\) is non-degenerate. 
\end{proof}

Letting \(A(\gamma) = A(\gamma,\gamma)\), we define the \emph{arithmetic path integral} of \(A\) to be
\[\sum_{\gamma \in J[\ell](\FF_q)} e^{2\pi i A(\gamma)}.\]

Before we compute this path integral, let us first deduce some properties of the action \(A\). 

\subsection{Relation to the abelian Chern-Simons Pairing}\label{sec:Abelian_CS}

We show that the action \(A\) above can be identified with a function field analogue of the Abelian Chern-Simons pairing defined in \cite{CKKPPYAbelianArithmeticChern}. Throughout this section we assume that \(\mu_{\ell^2} \subset \FF_q\), and thus by fixing an isomorphism \(\zeta\colon \mu_{\ell^2} \simto \Zmod{\ell^2}\) as Galois modules and sheaves, we can identify all cohomology groups with \(\mu_{\ell^2}\) and \(\Zmod{\ell^2}\) coefficients, as well as cohomology groups with \(\mu_\ell\) and \(\ZlZ\) coefficients.

We recall the statement of Artin--Verdier Duality applied to the sheaf  \(\mathcal F= \ZlZ\cong \mu_\ell\). 

\begin{theorem}[{\cite[Cor. 3.3]{MilneDuality}}]
	There is a pairing given by cup product
	\[\anglebrackets{\cdot ,\cdot }\colon H^{r}(X,\ZlZ)\times H^{3-r}(X,\mu_\ell)\xto{\cup} H^3(X,\Gm)\xrightarrow[\inv]{\sim}\QQ/\ZZ\]
	which induces an isomorphism
	\[H^{3-r}(X,\mu_\ell)\simrightarrow H^{3-r}(X,\ZlZ)^\vee\]
\end{theorem}

Since \(H^i(X,\ZlZ)\) is \(\ell\)-torsion, the image of any homomorphism from the group to \(\QQ/\ZZ\) must lie in \(\fZZZZ{\ell}\). 

We define Bockstein operators \(\delta\) to be the connecting homomorphism coming from the exact sequence of sheaves \(0\rightarrow\mu_\ell\rightarrow\mu_{\ell^2}\rightarrow\mu_\ell\rightarrow0\). 
\[\delta\colon H^i(X,\mu_\ell) \rightarrow H^{i+1}(X,\mu_\ell)\]
Similarly we also define \(\delta'\) to be the Bockstein operator coming from \(0\rightarrow\ZlZ\rightarrow\ZZ/\ell^2\ZZ\rightarrow\ZlZ\rightarrow0\).
\[\delta'\colon H^i(X,\ZlZ) \rightarrow H^{i+1}(X,\ZlZ)\]

These operators are compatible with \(\zeta_*\) such that there is an equality of maps \(\zeta_*\circ\delta'=\delta\circ\zeta_*\). I.e. the following diagram commutes
\[\begin{tikzcd}
H^1(X,\ZZ/\ell\ZZ) \arrow{r}{\delta'}\arrow{d}{\zeta_*}&  H^2(X,\ZZ/\ell\ZZ)\arrow{d}{\zeta_*}
\\H^1(X,\mu_\ell) \arrow{r}{\delta}&  H^2(X,\mu_\ell).
\end{tikzcd}\]

\begin{definition}
	We define the abelian Chern--Simons pairing as follows
	\[\CS(\cdot\,,\cdot)\colon H^1(X,\ZZ/\ell\ZZ)\times H^1(X,\mu_\ell)\rightarrow \frac 1\ell \ZZ/\ZZ\]
	\[(\alpha,\beta) \mapsto \inv(\alpha\cup \delta \beta)\]
\end{definition}

\begin{lemma}[{\cite[Lemma 2.1]{AbelianACS2019}}]
	Given classes \(\alpha\in H^1(X,\ZZ/\ell\ZZ)\) and \(\beta\in H^1(X,\mu_\ell)\), the Bockstein operators satisfies the identity
	\[\delta(\alpha\cup\beta)=\delta'\alpha\cup\beta-\alpha\cup\delta\beta.\]
\end{lemma}

\begin{proof}
	Since \(X\) is a projective variety, it suffices by \cite[10.2]{MilneLecturesEtaleCohomology} to verify the above formula is true in \v{C}ech cohomology.

	Let \(\mathcal U = (U_i)_{i\in I}\) be an \'etale covering of \(X\). We write \(U_{ij}\defeq U_i\times_C U_j\), \(U_{ijk}\defeq U_i\times_C U_j\times_C U_k\), etc. 

	Suppose \(\alpha\) is represented by the \v{C}ech cocycle \((\alpha_{ij})_{i,j\in I}\in Z^1(\mathcal U, \ZlZ)\). In order to compute \(\delta'\alpha\) explicitly, we first pick for every pair \((i,j)\), a lift \(\tilde{\alpha}_{ij}\) of \(\alpha_{ij}\) to \(\ZZ/\ell^2\ZZ\). Then the class of \(\delta'\alpha\) can be represented by the 2-cocycle whose sections are
	\[(\delta'\alpha)_{ijk}\defeq d(\tilde{\alpha})_{ijk} = \tilde{\alpha}_{ij}|_{U_{ijk}}-\tilde{\alpha}_{ik}|_{U_{ijk}}+\tilde{\alpha}_{jk}|_{U_{ijk}}\]
	which takes values in \(\ZlZ\hookrightarrow \ZZ/\ell^2\ZZ\). We can similarly represent \(\delta_1\beta\) as a \v{C}ech cocyle in the same way.

	The cup product \(\alpha\cup\beta\) is represented by the cocycle
	\[(\alpha\cup\beta)_{ijk}=\alpha_{ij}|_{U_{ijk}}\otimes \beta_{jk}|_{U_{ijk}}\]
	which when lifted to \(\ZZ/\ell^2\ZZ \otimes \mu_{\ell^2}\) is the cocyle \(\tilde{\alpha}_{ij}|_{U_{ijk}}\otimes \tilde{\beta}_{jk}|_{U_{ijk}}\). Applying the isomorphism \(\ZZ/\ell^2\ZZ \otimes \mu_{\ell^2}\simeq \mu_{\ell^2}\) given by \(a\otimes b \mapsto a\cdot b\), the cocycle representing \(\delta(\alpha\cup\beta)\) will be
	\[\begin{aligned}
		(\delta(\alpha\cup\beta))_{ijkl}\defeq& 
		(\tilde{\alpha}\cup\tilde{\beta})_{jkl}|_{U_{ijkl}}
		-(\tilde{\alpha}\cup\tilde{\beta})_{ikl}|_{U_{ijkl}}
		+(\tilde{\alpha}\cup\tilde{\beta})_{ijl}|_{U_{ijkl}}
		-(\tilde{\alpha}\cup\tilde{\beta})_{ijk}|_{U_{ijkl}}
		\\=& 
		\left(\tilde{\alpha}_{jk}\cdot\tilde{\beta}_{kl}\right)|_{U_{ijkl}}
		-\left(\tilde{\alpha}_{ik}\cdot\tilde{\beta}_{kl}\right)|_{U_{ijkl}}
		+\left(\tilde{\alpha}_{ij}\cdot\tilde{\beta}_{jl}\right)|_{U_{ijkl}}
		-\left(\tilde{\alpha}_{ij}\cdot\tilde{\beta}_{jk}\right)|_{U_{ijkl}}
		\\=& \left((\tilde{\alpha}_{jk}-\tilde{\alpha}_{ik})\cdot\tilde{\beta}_{kl}\right)|_{U_{ijkl}}+\left(\tilde{\alpha}_{ij}\cdot(\tilde{\beta}_{jl}-\tilde{\beta}_{jk})\right)|_{U_{ijkl}}
		\\=& \left((\tilde{\alpha}_{jk}-\tilde{\alpha}_{ik})\cdot\tilde{\beta}_{kl}\right)|_{U_{ijkl}}+\left(\tilde{\alpha}_{ij}\cdot(\tilde{\beta}_{jl}-\tilde{\beta}_{jk})\right)|_{U_{ijkl}}
		+\left(\tilde{\alpha}_{ij}\cdot\tilde{\beta}_{kl}\right)|_{U_{ijkl}}
		-\left(\tilde{\alpha}_{ij}\cdot\tilde{\beta}_{kl}\right)|_{U_{ijkl}}
		\\=& \left((\tilde{\alpha}_{jk}-\tilde{\alpha}_{ik}+\tilde{\alpha}_{ij})\cdot\tilde{\beta}_{kl}\right)|_{U_{ijkl}}+\left(\tilde{\alpha}_{ij}\cdot(\tilde{\beta}_{jl}-\tilde{\beta}_{jk}-\tilde{\beta}_{kl})\right)|_{U_{ijkl}}
		\\=& \left((\delta'\alpha)_{ijk}\cdot\tilde{\beta}_{kl}\right)|_{U_{ijkl}}+\left(\tilde{\alpha}_{ij}\cdot(\delta_1\beta)_{jkl}\right)|_{U_{ijkl}}.
	\end{aligned}\]
\end{proof}

\begin{lemma}\label{lem:CS_Pairing_Symmetric}
	Upon identifying \(H^{1}(X,\mu_\ell)\) with \(H^{1}(X,\ZlZ)\) via \(\zeta^*\), the Abelian Chern-Simons pairing is symmetric
	\[\CS(\alpha,\beta)=\CS(\beta,\alpha).\]
\end{lemma}

\begin{proof}
	Consider the pro-sheaf \(\ZZ_\ell(1)\defeq \varprojlim_i \mu_{\ell^i}\), there is an exact sequence
	\[0\rightarrow \ZZ_\ell(1)\xrightarrow{\ell} \ZZ_\ell(1) \rightarrow\mu_\ell\rightarrow0\]
	which induces the long exact sequence
	\[\cdots \rightarrow H^2(X,\ZZ_\ell(1))\rightarrow H^2(X,\mu_\ell)\rightarrow H^3(X,\ZZ_\ell(1))\rightarrow\cdots.\]
	
	Since \(H^3(X,\ZZ_\ell(1))= \varprojlim_i H^3(X,\mu_{\ell^i})=\varprojlim_i H^3(X,\mu_{\ell^i})=\ZZ_\ell\) is torsion free, the boundary map \(H^2(X,\mu_\ell)\rightarrow H^3(X,\ZZ_\ell(1))\) is a map from an \(\ell\)-torsion group into a torsion-free group, so must be the zero map. This implies that \(H^2(X,\ZZ_\ell(1))\rightarrow H^2(X,\mu_\ell)\) is surjective.
	
	Since the quotient map \(\ZZ_\ell(1)\rightarrow\mu_\ell\) factors through \(\mu_{\ell^2}\), the map \(H^2(X,\ZZ_\ell(1))\rightarrow H^2(X,\mu_\ell)\) factors through \(H^2(X,\mu_{\ell^2})\), which implies that \(H^2(X,\mu_{\ell^2})\rightarrow H^2(X,\mu_\ell)\) is also surjective. Then from the exact sequence
	\[\cdots \rightarrow H^2(X,\mu_{\ell^2})\rightarrow H^2(X,\mu_\ell) \xrightarrow{\delta_2}H^3(X,\mu_\ell)\rightarrow\cdots\]
	it is deduced that \(\delta_2\colon H^2(X,\mu_\ell)\to H^3(X,\mu_\ell)\) is the zero map. 
	
	Thus, by the previous lemma
	\[\delta'\alpha\cup\beta=\alpha\cup\delta\beta.\]
	Finally we conclude
	\[\CS(\alpha,\beta)=\inv(\alpha\cup\delta\beta)=\inv(\delta'\alpha\cup\beta)=\inv(\beta\cup\delta'\alpha)=\inv(\beta\cup\delta'\alpha)=CS(\beta,\alpha).\]
\end{proof}

\begin{proposition} \label{thm:CS_factors}
	The image of \(\FF_q^\times\) in \(H^1(X, \mu_\ell)\) under the Kummer map lies in the kernel of the Abelian Chern-Simons pairing, and thus the pairing factors into a pairing of the form
	\[\CS\colon \frac{H^1(X,\mu_\ell)}{\FF_q^\times} \times \frac{H^1(X,\mu_\ell)}{\FF_q^\times} \rightarrow \fZZZZ{\ell}.\]
\end{proposition}

\begin{proof}
	By \cref{rmk:Kummer_iso} it suffices to show that \(H^1(\Spec(\FF_q),\ZlZ)\) is in the kernel of the pairing.	Consider the commutative diagram
	\[\begin{tikzcd}
		{H^1(\Spec(\FF_q),\ZlZ)} & {H^1(X,\ZlZ)} \\
		{H^2(\Spec(\FF_q),\ZlZ)} & {H^2(X,\ZlZ)}
		\arrow[hook, from=1-1, to=1-2]
		\arrow["\delta", from=1-1, to=2-1]
		\arrow["\delta"', from=1-2, to=2-2]
		\arrow[from=2-1, to=2-2]
	\end{tikzcd}\]
	where both vertical morphisms are the Bockstein connecting homomorphisms coming from the exact sequence \(0\rightarrow \ZlZ \rightarrow \fZZZZ{\ell} \to \ZlZ \to 0\). But since \(\Spec(\FF_q)\) has cohomological dimension 1, \(H^2(\Spec(\FF_q),\ZlZ)=0\). Thus the Bockstein of the image of \(H^1(\Spec(\FF_q),\ZlZ)\) is equal to zero. 
	
	Thus if \(b\in H^1(\Spec(\FF_q),\ZlZ)\), then
	\[CS(-,b)=0.\] 

	Since we have shown in \cref{lem:CS_Pairing_Symmetric} that the pairing is symmetric, the result follows.
\end{proof}

We now wish to show that under the above identifications, the pairings \(\CS\) and \(A\) agree, but first we will show the following proposition, which is the function field analogue of \cite[Proposition 6.3]{LSTCohenLenstraHeuristicsBilinear}, the statement and proof of the following theorem follows Section 6.1 of \cite{LSTCohenLenstraHeuristicsBilinear} quite closely.

\begin{proposition}\label{Prop: Gm_Pairings_Agree}
	The following two pairings \(H^1(X,\Gm)\times H^1(X,\fZZZZ{\ell})\rightarrow \fZZZZ{\ell}\) are equal:
	\begin{enumerate}
		\item Identify \(H^1(X,\fZZZZ{\ell})\) with \(\Hom(\pi_1^{\ab}(X),\fZZZZ{\ell})\) and then with \(\Hom(\Pic(X),\fZZZZ{\ell})\) via the reciprocity map. 
		Identify \(H^1(X,\Gm)\) with \(\Pic(X)\). Then pair \(\Pic(X)\) and \(\Hom(\Pic(X),\fZZZZ{\ell})\) via the evaluation map. 
		\item Map \(H^1(X,\Gm)\) to \(H^2(X,\mu_\ell)\) via \(\kappa\) the Kummer map, then take the cup product with \(H^1(X,\fZZZZ{\ell})\) to obtain an element of \(H^3(X,\mu_\ell)\), then take the invariant map to obtain an element of \(\fZZZZ{\ell}\). 
	\end{enumerate}
\end{proposition}

\begin{proof}
	Let \(K\) be the function field of \(X\). Fix an element \(\alpha\in H^1(X,\fZZZZ{\ell})\). Since \(H^1(X,\Gm)=\Pic(X)\) is generated by divisors of a single point \(v\), it suffices to check that the pairings agree for every \([v]\) and \(\alpha\). 

	We consider the first pairing. The reciprocity map takes the divisor \([v]\) to \(\Frob_v\in \pi_1^{\ab}(X)\). Then pairing of \(\alpha\) and \([v]\) is given by the action of \(\Frob_v\) on the \(\bar \FF_q\)-points of \(\alpha\). In particular, since \(\Frob_v\) acts on \(K_v\), this pairing uniquely determined by the pairing of \(\Frob_v\) with the pullback \(\alpha_v\in H^1(\Spec \cO_{K_v},\fZZZZ{\ell})\) of \(\alpha\) via the map \(\Spec \cO_{K_v} \to X\). 

	Now we consider the second pairing. Let \(\pi\) be a uniformiser of \(K_v\), the local field at the point \(v\). We first show that by mapping \(\pi\in H^0(K_v,\Gm)\) along the top row of the commutative diagram in \cref{thm:big_comm_diagram}, we get the divisor \([v]\). 

	We let \(U=X\setminus \{v\}\). An element of \(H^1_c(U,\Gm)\) can be expressed as a line bundle on \(U\) with trivialisation at the punctured neighbourhood \(\Spec K_v\) of \(v\). Via this identification, the image of the map \(H^0(v,\Gm)\to H^1_c(U,\Gm)\) sends the element \(\pi\) to the trivial line bundle with identity trivialisation on \(U\), with trivialisation at \(v\) given by multiplication by \(\pi\). 
	
	On the other hand, a line bundle \(L\) on \(U\) with a trivialisation on the punctured neighbourhood \(\Spec K_v\) can be uniquely extended to a line bundle on all of \(X\) via taking only the sections of \(L(U)\) whose image under trivialisation does not have a pole at \(v\). For our line bundle above, the sections of \(\cO_U\) which do not have poles after multiplying by \(\pi\) are precisely those which have at most a simple pole at \(v\), but this is precisely the sheaf corresponding to the divisor \([v]\). 

	Thus by the commutative diagram in \cref{thm:big_comm_diagram}, we have that the Artin-Verdier pairing \(\inv(\kappa[v]\cup \alpha)\) is equal to the invariant map of the local cup product \(\inv_{K_v}(\kappa(\pi)\cup \alpha_v)\). By \cref{lem:local_cup_product} we conclude
	\[\inv(\kappa[v]\cup \alpha) = \inv_{K_v}(\kappa(\pi)\cup \alpha_v) =\alpha_v(\Frob_v).\]
\end{proof}

\begin{lemma}\label{thm:big_comm_diagram}
Let \(\alpha \in H^1(X,\fZZZZ{\ell})\), \(v\) be a place of \(X\) with punctured neighbourhood \(\Spec K_v\), \(\alpha_v\in H^1(\cO_{K_v},\fZZZZ{\ell})\) to be the pullback of \(\alpha\) via \(\Spec \cO_{K_v}\to X\), and \(U=X-\{v\}\). Then the following diagram commutes:

\[\begin{tikzcd}
	{H^0(K_v,\Gm)} & {H^1_c(U,\Gm)} & {H^1(X,\Gm)} \\
	{H^1(K_v,\mu_\ell)} & {H^2_c(U,\mu_\ell)} & {H^2(X,\mu_\ell)} \\
	{H^2(K_v,\mu_\ell)} & {H^3_c(U,\mu_\ell)} & {H^3(X,\mu_\ell)} \\
	\fZZZZ{\ell} & \fZZZZ{\ell} & \fZZZZ{\ell}
	\arrow[from=1-1, to=1-2]
	\arrow["\kappa", from=1-1, to=2-1]
	\arrow[from=1-2, to=1-3]
	\arrow["\kappa", from=1-2, to=2-2]
	\arrow["\kappa", from=1-3, to=2-3]
	\arrow[from=2-1, to=2-2]
	\arrow["{\cup \alpha_v}", from=2-1, to=3-1]
	\arrow[from=2-2, to=2-3]
	\arrow["{\cup \alpha}", from=2-2, to=3-2]
	\arrow["{\cup \alpha}", from=2-3, to=3-3]
	\arrow[from=3-1, to=3-2]
	\arrow["\inv", equals, from=3-1, to=4-1]
	\arrow[from=3-2, to=3-3]
	\arrow["\inv", equals, from=3-2, to=4-2]
	\arrow["\inv", equals, from=3-3, to=4-3]
	\arrow[equals, from=4-1, to=4-2]
	\arrow[equals, from=4-2, to=4-3]
\end{tikzcd}\]

Where \(\kappa\) is the Kummer map arising from the Kummer exact sequence, the horizontal maps in the left column arise from the exact sequence of compactly supported cohomology in \cite[II.2.3(a)]{MilneADT}, and the horizontal maps in the right column arise from \cite[II.2.3(d)]{MilneADT} and the fact that \(H^r_c(X,\cF)=H^r(X,\cF)\) since \(X\) is compact.
\end{lemma}

\begin{proof}
	The compactly supported cohomology groups \(H^\star_c(U,\cF)\) are defined as a shifted mapping cone of the localisation morphism of \v{C}ech cochains
	\(\loc\colon C^\bullet(U,\cF)\to C^\bullet(K_v,\cF_v)\), or in other words the compactly supported cohomology groups \(H^r_c(U,\cF)\) are the cohomology groups of the complex
	\[\cone^\bullet(\loc[-1])= C^\bullet(U,\cF)\oplus C^\bullet(K_v,\cF_v)[-1].\]
	So on the level of cochains, the left column horizontal maps \(H^r(K_v,\cF_v)\rightarrow H^{r+1}_c(U,\cF)\) is given by the inclusion map into the local component of the mapping cone. 

	On the other hand the right column horizontal arrows are obtained on the level of cochains via the composition
	\[C^\bullet(U,\cF)\oplus C^\bullet(K_v,\cF_v)[-1]\to C^\bullet(U,\cF)\to C^\bullet(X,\cF).\]
	On the level of cochains, the connecting homomorphism \(\kappa\) is obtained via the composition of inverse image and differential maps; and the cup product with \(\alpha\) can be interpreted as a tensor product of \v{C}ech cocycles. All of these maps  commutes with the inclusions, projections and pullbacks, and so the diagram commutes. 
\end{proof}

\begin{lemma}\label{lem:local_cup_product}
	Let \(K=K_v\) be a non-archimedean local field. Let \(G=\pi_1(\Spec K_v)= \Gal(\bar{K_v}/K_v)\) and \(\Gamma = \pi_1(\Spec \cO_{K_v})= \Gal(K_v^{ur}/K_v)\) be the absolute Galois group and maximal unramified Galois group of \(K_v\) respectively. Suppose we are given a uniformiser \(\pi\in H^0(K_v,\Gm)\) and an element \(\alpha\in H^1(\cO_{K_v},\fZZZZ{\ell})\), then
	\[\inv(\kappa(\pi)\cup\alpha)=\alpha(\Frob_v).\]
	Here \(\alpha \in H^1(\cO_{K_v},\fZZZZ{\ell}) =\Hom(\Gamma,\fZZZZ{\ell})\) is viewed as a homomorphism from \(\Gamma\) to \(\fZZZZ{\ell}\). 
\end{lemma}

\begin{proof}
	Let \(\pi_0\) be an \(\ell\)th root of \(\pi\) so that \(\pi_0^\ell=\pi\). Then a cocycle representing \(\kappa(\pi)\in H^1(K_v,\mu_\ell)\) is \(\sigma \mapsto \frac{\sigma(\pi_0)}{\pi_0}\).
	
	Let \(\phi\) be the cocycle representing \(\kappa(\pi)\cup\alpha \in H^2(K_v,\mu_\ell)\). Since the map \(\fZZZZ{\ell}\times \mu_\ell\to \mu_\ell\) is given by \((q,\zeta) \mapsto \zeta^{\ell q}\), it follows that the cocycle \(\phi\) is given by
	\[\phi(\sigma,\tau)=\left(\frac{\sigma(\pi_0)}{\pi_0}\right)^{\ell\alpha(\tau)}.\]

	Given an element \(\sigma\in \Gamma\), let \(n_\sigma\) denote the unique element of \(\{0,1,\dots, \ell-1\}\) that is congruent to \(\ell\alpha(\sigma)\). This definition can also be naturally be extended to \(\sigma\in G\) via quotienting by the inertia subgroup first. 

	On the other hand, define the cocycle \(\psi\colon \Gamma^2\rightarrow K_v^{ur \times}\) giving a class \(H^2(\Gamma, K_v^{ur \times})\) via
	\[\psi(\sigma,\tau)=\begin{cases}
		1 & \text{if } n_\sigma + n_\tau < \ell
		\\ \pi & \text{if } n_\sigma + n_\tau \geq \ell
	\end{cases}.\]
	
	We define a cocycle \(c\colon G\to \bar{K}_v^\times\) via \(c(\sigma)=\pi_0^{n_\sigma}\). Then the coboundary of \(c\) is
	\[\begin{aligned}
		dc(\sigma,\tau) &= \frac{\sigma c(\tau)c(\sigma)}{c(\sigma\tau)}
		\\&= \frac{\sigma(\pi_0^{n_\tau})\pi_0^{n_\sigma}}{\pi_0^{n_{\sigma\tau}}}
		\\&= 
		\begin{cases}
			\dfrac{\sigma(\pi_0)^{n_\tau}}{\pi_0^{n_\tau}} & \text{if } n_\sigma + n_\tau < \ell
			\\[10pt]\pi\dfrac{\sigma(\pi_0)^{n_\tau}}{\pi_0^{n_\tau}} & \text{if } n_\sigma + n_\tau \geq \ell
		\end{cases}
		\\&= \phi \inf \psi.
	\end{aligned}\]

	Thus \([\phi]=(\inf[\psi])^{-1}\). Thus it suffices to compute \(\inv(\inf([\psi]))\). 

	Now, from \cite[p. 130]{CFAlgebraicNumberTheory} the invariant map \(\inv\colon H^2(G,\bar{K}_v^\times)\rightarrow \QZ\) is defined as the composition
	\[H^2(G,\bar{K}_v^\times)\xleftarrow[\sim]{\inf}H^2(\Gamma, K_v^{ur \times})\xto{\nu}H^2(\Gamma,\ZZ)\xot[\sim]{\rho}H^1(\Gamma,\QZ)\xto[\sim]{\gamma}\QZ.\]
	Where:
	\begin{itemize}
		\item \(\nu\) is the map on cohomology induced by the valuation map \(K_v^{ur \times}\to \ZZ\).
		\item \(\rho\) is the connecting homomorphism of the short exact sequence \(0\to\ZZ\to\QQ\to\QZ\to0\).
		\item \(\gamma\) is the homomorphism \(\Hom(\Gamma,\QZ)\to\QZ\) given by evaluating the homomorphism at the Frobenius element \(\Frob_v\in \Gamma\). 
	\end{itemize}

	Thus we wish to compute \(\gamma\circ\rho^{-1}\circ\nu([\psi])\), firstly we see that
	\[\nu(\psi)(\sigma,\tau)=\begin{cases}
		0 & \text{if } n_\sigma + n_\tau < \ell
		\\ 1 & \text{if } n_\sigma + n_\tau \geq \ell.
	\end{cases}\]

	On the other hand, by definition \(\alpha \in \Hom(\Gamma,\fZZZZ{\ell})\) satisfies \(\gamma(\alpha) = \alpha(\Frob_v)\), so it just suffices to prove that \(\rho(\alpha)=\nu(\psi)\). Note that the cocycle \(a\colon\Gamma\to\QQ\) given by \(a(\sigma)=\frac{n_\sigma}{\ell}\) is a lift of \(\alpha\), and \(a\) has coboundary
	\[\begin{aligned}
		da(\sigma,\tau)&= \sigma a(\tau)+a(\sigma)-a(\sigma\tau)
		\\&=\begin{cases}
			0 & \text{if } n_\sigma + n_\tau < \ell
			\\ 1 & \text{if } n_\sigma + n_\tau \geq \ell
		\end{cases}
		\\&= \nu(\psi).
	\end{aligned}\]
\end{proof}

Finally, the main result of this subsection is the following theorem, showing that the Abelian Chern-Simons pairing agrees with the Class Field Theory pairing.

\begin{theorem}\label{thm:Pairings_Agree}
	Under the isomorphism \(\bar{s}\colon \frac{H^1(X,\mu_\ell)}{\FF_q^\times}\simrightarrow J[\ell](\FF_q)\) defined in \cref{thm:CS_factors}, the pairings \(A\) and \(\CS\) agree. In other words the following diagram commutes

	\[\begin{tikzcd}
		{\frac{H^1(X,\mu_\ell)}{\FF_q^\times}} & {\frac{H^1(X,\mu_\ell)}{\FF_q^\times}} && {\fZZZZ{\ell}} \\
		{J[\ell](\FF_q)} & {J[\ell](\FF_q)} && {\fZZZZ{\ell}.}
		\arrow["\times"{description}, draw=none, from=1-1, to=1-2]
		\arrow["{\bar{s}}", from=1-1, to=2-1]
		\arrow["\CS"', from=1-2, to=1-4]
		\arrow["{\bar{s}}", from=1-2, to=2-2]
		\arrow[from=1-4, to=2-4]
		\arrow["\times"{description}, draw=none, from=2-1, to=2-2]
		\arrow["A"', from=2-2, to=2-4]
	\end{tikzcd}\]
\end{theorem}

\begin{proof}
	Let \(\iota\) denote the map \(H^r(X,\mu_\ell)\to H^r(X,\Gm)\) induced by the inclusion \(\mu_\ell\hookrightarrow \Gm\).

	Given elements \(\alpha,\beta\in H^1(X,\mu_\ell)\), we see that the pairing \(A(s(\alpha),s(\beta))\) can be identified with pairing (1) of \(\iota(\alpha)\) and \(\iota(\beta)\) in \cref{Prop: Gm_Pairings_Agree}. 
 
	On the other hand, observe that by functoriality, the map of exact sequences
	\[\begin{tikzcd}[cramped]
		0 & {\mu_\ell} & {\mu_{\ell^2}} & {\mu_\ell} & 0 \\
		0 & {\mu_\ell} & \Gm & \Gm & 0
		\arrow[from=1-1, to=1-2]
		\arrow[from=1-2, to=1-3]
		\arrow[equals, from=1-2, to=2-2]
		\arrow[from=1-3, to=1-4]
		\arrow[hook, from=1-3, to=2-3]
		\arrow[from=1-4, to=1-5]
		\arrow[hook, from=1-4, to=2-4]
		\arrow[from=2-1, to=2-2]
		\arrow[from=2-2, to=2-3]
		\arrow[from=2-3, to=2-4]
		\arrow[from=2-4, to=2-5]
	\end{tikzcd}\]
	induces the following commutative diagram of cohomology:
	\[\begin{tikzcd}[cramped]
		{H^r(X,\mu_\ell)} && {H^r(X,\mu_\ell)} \\
		& {H^r(X,\Gm)}
		\arrow["\delta", from=1-1, to=1-3]
		\arrow["\iota",from=1-1, to=2-2]
		\arrow["\kappa", from=2-2, to=1-3]
	\end{tikzcd}\]
	Where the Bockstein map factors through the Kummer map. Then it is easy to see that the pairing \(\CS(\alpha,\beta)\) is equal to the pairing (2) of \(\iota(\alpha)\) and \(\iota(\beta)\) in \cref{Prop: Gm_Pairings_Agree}.

	Finally, by \cref{Prop: Gm_Pairings_Agree}, these two pairings agree. 
\end{proof}

An immediate corollary is the following:

\begin{corollary}\label{cor:A_Symmetric}
    The pairing \(A\) is symmetric.
\end{corollary}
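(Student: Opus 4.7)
The plan is immediate: combine \cref{thm:Pairings_Agree} with \cref{lem:CS_Pairing_Symmetric}. The former identifies the pairing $A$ on $J[\ell](\FF_q)$ with the Abelian Chern--Simons pairing $CS$ on $H^1(X,\mu_\ell)/\FF_q^\times$ via the isomorphism $\bar{s}$, and the latter establishes that $CS$ is symmetric after identifying $\mu_\ell$ with $\ZlZ$ through the fixed isomorphism $\zeta$ that is in force throughout this subsection.

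Concretely, I would take arbitrary $\beta,\gamma \in J[\ell](\FF_q)$, choose preimages $b=\bar{s}^{-1}(\beta)$ and $c=\bar{s}^{-1}(\gamma)$ in $H^1(X,\mu_\ell)/\FF_q^\times$, and chase the commutative square of \cref{thm:Pairings_Agree}:
\[A(\beta,\gamma)=CS(b,c)=CS(c,b)=A(\gamma,\beta),\]
where the outer equalities come from \cref{thm:Pairings_Agree} and the middle equality is \cref{lem:CS_Pairing_Symmetric}. There is no real obstacle here—this is purely a transport-of-structure argument. The only compatibility to verify is that the identification $\mu_\ell \simeq \ZlZ$ used when stating the symmetry of $CS$ matches the one implicitly used in \cref{def:A_Pairing}, but since the entire subsection operates under the single standing choice of $\zeta$, this is automatic and requires no additional input.
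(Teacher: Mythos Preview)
Your proof is correct and follows exactly the paper's approach: it simply combines \cref{thm:Pairings_Agree} and \cref{lem:CS_Pairing_Symmetric} to transport the symmetry of $CS$ to $A$ via $\bar{s}$. The additional remarks about the fixed isomorphism $\zeta$ are accurate but not strictly needed, since as you note this is the standing choice throughout the subsection.
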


\begin{proof}
	It follows from \cref{thm:Pairings_Agree} and \cref{lem:CS_Pairing_Symmetric} that \(A\) is symmetric.
\end{proof}

\subsection{Computation of the Arithmetic Path Integral}

\begin{theorem}\label{thm:Path_Integral}
    Suppose \(J(\FF_q)\) has no points of order \(\ell^2\) and that \(\mu_{\ell^2}\subset \FF_q\). Then the arithmetic path integral evaluates to
	\[\sum_{\gamma \in J[\ell](\FF_q)} e^{2\pi i A(\gamma)} = \sqrt{|J[\ell](\FF_q)|} \legendre{\det A}{\FF_\ell} \left(i^{(\ell-1)^2/4}\right)^{\dim_\ell J[\ell](\FF_q)}.\]

	If additionally the Frobenius \(\Fr_q\) acts semisimply on \(J[\ell]\), then we can express the path integral as
	\[\sum_{\gamma \in J[\ell](\FF_q)} e^{2\pi i A(\gamma)} = \sqrt{|J[\ell](\FF_q)|} \legendre{(-1)^{(\dim_\ell J[\ell](\FF_q))/2}\det(A)}{\FF_\ell}.\]
\end{theorem}

\begin{proof}
	By \cref{prop:A_nondegen_bilinear} and \cref{cor:A_Symmetric} \(A\) is a non-degenerate symmetric bilinear form, so we can write the sum as a Gaussian integral over a finite field of the form
	\[\sum_{x\in\FF_\ell^n}\exp\left[\frac{2\pi i}{l}xQx^T\right].\]
	Here, \(x\) is a vector representing \(\gamma\), \(Q\) is the matrix representing \(A\) and \(n=\dim_\ell J[\ell](\FF_q)\). The factor of \(\frac 1\ell\) comes from the fact that \(A\) has image in \(\frac 1l\ZZ/\ZZ\) rather than \(\ZlZ\). 

	Then applying the theorem in \cite[Chapter 9, Theorem 3.1]{Neretin2011LecturesOG}, we obtain
	\[\begin{aligned}
		\sum_{\gamma \in J[\ell](\FF_q)} e^{2\pi i A(\gamma)}&= \ell^{\frac n2} \legendre{\det{A}}{\FF_\ell} \left(i^{(\ell-1)^2/4}\right)^{n}
		\\&= \sqrt{|J[\ell](\FF_q)|} \legendre{\det A}{\FF_\ell} \left(i^{(\ell-1)^2/4}\right)^{\dim_\ell J[\ell](\FF_q)}
	\end{aligned}\]

	Noting that
	\[i^{(\ell-1)^2/4}=\begin{cases}
	1, & \ell\equiv 1 \mod 4
	\\ i, & \ell\equiv 3\mod 4
	\end{cases}\]
	and thus in particular when \(\ell\equiv 1\mod 4\), this path integral is real and simply evaluates to
	\[\sqrt{|J[\ell](\FF_q)|} \legendre{\det A}{\FF_\ell}.\]

	On the other hand, when \(\ell\equiv 3\pmod 4\), we note that if the Frobenius \(\Fr_q\) acts semisimply on \(J[\ell]\) then by \cref{prop:eval_multiplicity} that \(\dim_\ell J[\ell](\FF_q)\) is actually even, so the path integral is also real, and can be written as
	\[(-1)^{(\dim_\ell J[\ell](\FF_q))/2}\sqrt{|J[\ell](\FF_q)|} \legendre{\det A}{\FF_\ell}.\]

	Since \(\displaystyle\legen{-1}{\FF_\ell}=\begin{cases}
	1 &\ell\equiv 1\mod 4
	\\ -1&\ell\equiv 3\mod 4
	\end{cases}\), we can combine these two expressions to obtain
	\[\sum_{\gamma \in J[\ell](\FF_q)} e^{2\pi i A(\gamma)} = \sqrt{|J[\ell](\FF_q)|} \legendre{(-1)^{(\dim_\ell J[\ell](\FF_q))/2}\det(A)}{\FF_\ell}.\]
\end{proof}

\section{Proving the Main Theorem} \label{sec:Conclusion}

Finally, we combine the results of \cref{thm:Path_Integral} and \cref{thm:TraceFormula} to obtain our main theorem with all the signs determined.

\begin{theorem}\label{thm:MainTheoremFull}
Let \(J\) be the Jacobian of a genus \(g\) curve \(X\) over a finite field \(\FF_q\). 
	Let \(\ell\) be an odd prime satisfying \(q\equiv 1\pmod {\ell^2}\). If \(\Fr_q\) acts semisimply on the \(\FF_\ell\)-vector space \(J[\ell]\), and \(J(\FF_q)\) has no points of order \(\ell^2\), then we have the following equality
	\[\tr(\Fr_q|\cH) =\legendre{(-1)^g \det'(1-\Fr_q) \det(A)}{\FF_\ell}\sum_{\gamma\in J[\ell](\FF_q)}e^{2\pi iA(\gamma)},\]

	where \(\det'(1-\Fr_q)\) is the regularised determinant of the linear map \(1-\Fr_q\) on vector space \(J[\ell]\). 
\end{theorem}

\begin{proof}
	By combining \cref{thm:Path_Integral,thm:TraceFormula} we have the following equality where both sides are equal to \(\sqrt{|J[\ell](\FF_q)|}\),
	\[\legen{(-1)^{g-\dim_{\ell} J[\ell](\FF_q)/2}\det'(1-\Fr_q)}{\FF_\ell}\tr(\Fr_q|\cH) =\legendre{(-1)^{(\dim_\ell J[\ell](\FF_q))/2}\det(A)}{\FF_\ell}\sum_{\gamma\in J[\ell](\FF_q)}e^{2\pi iA(\gamma)}.\]

	Rearranging the Legendre symbols, we obtain the desired result.
\end{proof}

\section{Computational Examples and Remarks}\label{sec:Examples}

In this section we provide two explicit examples of \cref{thm:MainTheoremFull} for an elliptic curve \(X/\FF_q\), in the case where \(q=19, \ell=3\). 

Let us fix an isomorphism \(\zeta\colon\{1,7,11\}=\mu_3(\FF_{19})\simeq \FF_3\) by sending \(7\in \mu_3(\FF_{19})\) to \(1\in \FF_3\). We also fix the isomorphism \(\psi\colon \FF_3\simeq \mu_3(\CC)\) via \(1\mapsto \xi = e^{\frac{2\pi i}{3}}.\)

\begin{example}
	Let \(X=E\) be the curve \[E: y^2 = x^3 - 5x /\FF_{19}.\]
	
	We can check that the only \(\FF_{19}\)-rational 3-torsion point is the origin \(O_E\), and that the 3-torsion group \(E[3]\) is defined over the extension \(\FF_{19^4}/\FF_{19}\). Explicitly, let \(\FF_{19^4}=\FF_{19}(\alpha)\), where \(\alpha\in \FF_{19^4}\) satisfies the minimal polynomial \[\alpha^4+\alpha^3+2\alpha^2+15\alpha+3=0.\]

	Then using the \verb:torsion_basis(): function on SageMath, the 3-torsion \(E[3]\) is generated by the elements 
	\[\begin{aligned}
		P_1&=(18\alpha^3 + \alpha + 17, 7\alpha^3 + 15\alpha^2 + \alpha + 15)
	\\	P_2&=(18\alpha^3 + \alpha + 11, 14\alpha^3 + 4\alpha^2 + 9\alpha + 17).
	\end{aligned}\]

	By a straightforward calculation we can see that the Frobenius acts on the basis elements in the following manner
	\[\begin{aligned}
		\Fr_{19}P_1&=P_1+2P_2
		\\\Fr_{19}P_2&=2P_1+2P_2.
	\end{aligned}\]

	Moreover, let \(\langle-,-\rangle\) denote the Weil pairing on \(E[3]\), via the \verb:weil_pairing(): function on SageMath, we compute that \[\langle P_1,P_2\rangle = 7\in \mu_3(\FF_{19}),\]
	which we will identify with \(\langle P_1,P_2\rangle= 1\in \FF_3\). From now on the values of the Weil pairing \(\langle -,-\rangle\) refer to the values in \(\FF_3\) after identifying \(\zeta\colon\mu_3(\FF_{19})\simeq \FF_3\). 

	Since \(E[3]\) is a 2-dimensional symplectic space with respect to the Weil pairing, every 1-dimensional subspace is Lagrangian. However, the matrix \(\left(\begin{smallmatrix}1&2\\2&2\end{smallmatrix}\right)\) is not diagonalisable over \(\FF_3\), so there does not exist any Frobenius-invariant Lagrangian. Let us pick the Lagrangian \(M=\FF_3P_1\), the 1-dimensional subspace generated by \(P_1\). We also pick the complement \(M'=\FF_3P_2\), so that \(E[3]=M\oplus M'\). 

	We now evaluate \(\tr(\Fr_{19}| C_{M^\circ})\) via \cref{thm:Trace_Any_Lagrangian}. 
	
	First we compute the constant \(A_{M^\circ,\Fr_{19} M^\circ}\). Fix the orientation \(o_M=P_1\). 
	Note that \(M\) and \(\Fr_{19} M\) are transverse, so \(I=M\cap \Fr_{19} M=\{O_E\}\) is 0-dimensional, and \(n_I=1\). 
	We also have \(o_{\Fr_{19} M}=\Fr_{19} P_1=P_1+2P_2\), and since \(I\) is trivial we have that \(o_{M/I}=o_M\) and \(o_{\Fr_{19} M/I}=o_{\Fr_{19} M}\). We also compute that 
	\[G(\tfrac{1}{2},3)= \sum_{x\in \FF_3}\psi(\tfrac{1}{2}x^2)=1+\xi^2+\xi^2=-i\sqrt{3}.\]

	Thus we have
	\[\begin{aligned}
		A_{M^\circ,\Fr_{19} M^\circ}&=(G(\tfrac{1}{2},3)/3)^{1} \left(\frac{(-1)^{\binom{1}{2}} \langle P_1+2P_2, P_1\rangle}{\FF_3}\right).
		\\&=\frac{-i\sqrt 3}{3} \left(\frac{1}{\FF_3}\right)
		\\&=\frac{-i\sqrt 3}{3}.
	\end{aligned}\]
	
	Now we compute the sum portion of the formula in \cref{thm:Trace_Any_Lagrangian}.
	
	We note that \(\cal S = M'\), since \(M+\Fr_{19}M = E[3]\) spans the entire space. For each \(x\in M' = \{O_E,P_2, 2P_2\}\), we wish to find elements \(m_{x}, n_{x}\in M\) such that \[\Fr_{19}x-x=m_{x}+\Fr_{19}n_{x},\]
	and we wish to evaluate the sum \(\sum_{x\in \mathcal S} \psi\left(\frac12\langle m_x+n_x,x\rangle\right)\). 

	By inspection, we can make the following choices of \(m_x,n_x\) and obtain the following values of \(\langle m_x+n_x,x\rangle\)
	\[\left\{\begin{aligned}
		m_{P_2}=O_E,&\quad n_{P_2}=2P_1,& \langle m_{P_2}+n_{P_2}, P_2\rangle= \langle 2P_1,P_2\rangle=2
		\\m_{2P_2}=O_E,&\quad n_{2P_2}=P_1,& \langle m_{2P_2}+n_{2P_2}, 2P_2\rangle= \langle P_1,2P_2\rangle=2
		\\m_{O_E}=O_E,&\quad n_{O_E}=O_E,&\langle m_{O_E}+n_{O_E}, O_E\rangle= \langle O_E,O_E\rangle=0
	\end{aligned}\right..\]

	Thus we evaluate
	\[\sum_{x\in \mathcal S} \psi\left(\frac12\langle m_x+n_x,x\rangle\right)=\xi+\xi+1=i\sqrt3.\]

	Finally, combining the two computations, we have that the trace is equal to 
	\[\begin{aligned}
	\tr(\Fr_{19}| C_{M^\circ})&=A_{M^\circ,\Fr_{19} M^\circ}\sum_{x\in \mathcal S} \psi\left(\frac12\langle m_x+n_x,x\rangle\right)
	\\&=\frac{-i\sqrt 3}{3}(i\sqrt3)
	\\&=1.
	\end{aligned}\]

	On the other hand, since \(E[3](\FF_{19})=\{O_E\}\) is zero-dimensional, the map \(A\) must just be the zero map, and thus the path integral trivially evaluates to 1. 

	We finally check that our computation indeed satisfies \cref{thm:MainTheoremFull}, we check that the regularised determinant \(\det'(1-\Fr_{19})\) is just equal to the determinant \(\det(1-\Fr_{19})=\det(\begin{smallmatrix}0&1\\1&2\end{smallmatrix})=-1\). 
	
	Indeed, the trace and path integral in this example are both equal to 1, and thus differ by the sign 
	\[\legendre{(-1)^g\det'(1-\Fr_{19})}{\FF_3}=\legendre{1}{\FF_3}=1.\]
\end{example}

\begin{remark}
	In the above example, the trace computation was quite involved while the path integral was trivial since \(E[3]\) had no non-trivial \(\FF_{19}\)-rational points. We will now provide a second example where the trace computation is simple due to the existence of an invariant Lagrangian, but the path integral computation is more intricate. 

	We also remark that in the elliptic curve case it is impossible for both the trace and the path integral to be `interesting'. Due to our assumptions of the semisimplicity of Frobenius and \(\mu_3\subset \FF_{19}\), it follows \(E[3](\FF_{19})\) must be an even dimensional linear subspace of \(E[3]\), which means it must be trivial or the entire space. If it is zero dimensional, then the path integral is trivial; if it is the entire space, then \(\Fr_{19}\) must act trivially on \(E[3]\), and every Lagrangian is Frobenius-invariant. 

	For higher genus curves there will exist examples where both the trace and path integral are non-trivial computations. 
\end{remark}

\begin{example}
	Let \(X=E\) be the curve \[E: y^2=x^3+12x-1/\FF_{19}.\]

	In this case, we notice that the 3-torsion \(E[3]\subset E(\FF_{19})\) is \(\FF_{19}\)-rational. This means that the Frobenius \(\Fr_{19}\) acts trivially on the \(\FF_3\) vector space \(E[3]\), and any Lagrangian \(M\) would be fixed by the Frobenius action. 

	By \cref{Cor: Invariant Lagrangian}, it follows that the trace is equal to \[\tr(\Fr_{19}|C_M)=\legendre{\det(\Fr_{19}|M)}{\FF_3}\sqrt{|E[3](\FF_{19})|}=3.\]

	Now we move onto the path integral calculation. We use the \verb:torsion_basis(): function on SageMath again, to find the following basis for \(E[3]\) 
	\[\begin{aligned}
		P_1&=(10,13)
		\\P_2&=(15,18).
	\end{aligned}\]

	Let \(\alpha_i\) denote the \(H^1(E,\ZZ/3\ZZ)=\Hom(\pi_1(E),\ZZ/3\ZZ)\) class associated to \(P_i\). We wish to compute \(\alpha_i\) explicitly. 
	
	
	We use the function \verb:basis_function_space(): on SageMath to compute that
	\[\begin{aligned}
	\div (y + 7x + 12)=3[P_1]-3[O_E]
	\\\div (y + 11x + 7)=3[P_2]-3[O_E]
	\end{aligned}\]

	Let \(f_1=y + 7x + 12\) and \(f_2=y + 11x + 7\), then the cohomology class \(\alpha_i\) corresponds to the extension of global function fields \(L_i\defeq \FF_{19}(E)[\sqrt[3]{f_i}]/\FF_{19}(E)\). In particular, \(\alpha_i\) corresponds to the homomorphism
	\[\alpha_i\colon \sigma \mapsto \zeta\left( \frac{\sigma(\sqrt[3]{f_i})}{\sqrt[3]{f_i}}\right).\]

	Explicitly, the field \(L_i\) has presentation 
	\[L_i=\rm{Frac} \,\FF_{19}[x,y,z]/(y^2-x^3-12x+1,z^3-f_i).\]

	On the other hand, let \(\Fr_{P_i} = \rm{Res}\, P_i\in \pi_1^{\ab}(E)\) denote the images of \(P_i\) under the reciprocity map. We wish to explicitly write down the action of \(\alpha_i\) when restricted to the field extensions \(L_j\), for \(i,j\in \{1,2\}\). We will now compute one of the cases in detail below. 

	\textbf{Case \(i=1,j=2\):} Let \(Q\) be a place of \(L_1\) lying over \(P_2\). We note that since \(L_i\) is an unramified extension of \(\FF_{19}(E)\), it follows that \(t=x-15\) is a local parameter for \(Q\), so the local field \((L_1)_{Q}\) can be written as \(\FF_{19^3}((t))\), where \(\FF_{19^3}\) is the residue field. Then we can write \(y\) as the power series
	\[y=\sqrt{(t+15)^3+12(t+15)-1}=18 + 8t + 9t^3 + 15t^4+\cdots.\]
	Here, we picked \(y\) to be the square root with constant term \(18\) since it vanishes at the point \(P_2=(15,18)\).

	Then, \(z\) is represented by the power series 
	\[\begin{aligned}
	z&=\sqrt[3]{y + 7(t+15) + 12}
	\\&= \sqrt[3]{2}+ 12\sqrt[3]{2}t+8\sqrt[3]{2}t^2 +\cdots
	\end{aligned}\]

	where \(\sqrt[3]{2}\in \FF_{19^3}\) generates the residue field extension \(\FF_{19^3}/\FF_{19}\). Applying the Frobenius element to the coefficients of \(z\) we obtain
	\[\begin{aligned}
		\Fr_{P_2}(z)&=7\sqrt[3]{2} + 8\sqrt[3]{2}t +18\sqrt[3]{2}t^2 + \cdots
		\\&= 7z.
	\end{aligned}\]

	Thus we conclude that 
	\[A(P_1,P_2)=\alpha_1(\Fr_{P_2})=\zeta\left(\frac{\Fr_{P_2}(z)}{z}\right)=\zeta(7)=1\in \FF_3.\]

	\textbf{Other Cases:} Via the same method, we also compute that \(A(P_1,P_1)=0\), \(A(P_2,P_2)=2\) and \(A(P_2,P_1)=1\).

	Thus the pairing \(A\) acts via the matrix \(\begin{pmatrix}0&1\\1&2\end{pmatrix}\), and the quadratic form associated to it is \[A(aP_1+bP_2)=2ab+2b^2=\tfrac12(a+2b)^2-\tfrac12 a^2= a^2-(b-a)^2.\]
	
	Now we can evaluate the path integral

	\[\begin{aligned}
	\sum_{\gamma\in E[\ell](\FF_{19})}e^{\frac{2\pi iA(\gamma)}{3}}
	=\sum_{a,b\in \FF_3}\xi^{a^2-(b-a)^2}
	=\sum_{a\in \FF_3}\sum_{c\in \FF_3}\xi^{a^2}\xi^{-c^2}
	&=\left(\sum_{a\in \FF_3}\xi^{a^2}\right)\left(\sum_{c\in \FF_3}\xi^{-c^2}\right)
	\\&=(1+\xi+\xi)(1+\xi^2+\xi^2)
	\\&=(i\sqrt{3})(-i\sqrt{3})
	\\&= 3.
	\end{aligned}\]

	Finally, we check that the trace and the path integral computed satisfies \cref{thm:MainTheoremFull}. 

	We see that \(\det A = -1\), and since \(1-\Fr_{19}\) is identically the zero map on \(E[3]\), so the regularised determinant is equal to  \(1\). Since both the trace and the path integral evaluate to \(3\) in this example, they indeed differ by the sign
	\[\legendre{(-1)^g \det'(1-\Fr_q) \det(A)}{\FF_\ell}=\legendre{(-1)(1)(-1)}{\FF_3}=1.\]
\end{example}


\begin{remark}
We highlight possible further research directions related to trace--path integral formulae in arithmetic.

\begin{enumerate}
	\item \textbf{An intrinsic proof of the trace--path integral formula.} The current proof evaluates each side of the formula separately and compares them. A more direct proof that shows the trace and the path integral are intrinsically related could be more enlightening.
	\item \textbf{Trace--path integral formulae for number fields.} By Weil's trichotomy of analogies between number fields, function fields, and manifolds, we should expect a similar formula to exist in the case of number fields. While number fields do not have a Frobenius action, but given a cyclotomic \(\ZZ_p\)-extension \(K_\infty/K\), the \(\gamma\) action for a topological generator \(\gamma\in \Gal(K_\infty/K)\) can be viewed as the analogous action.
		\begin{center}
		\begin{tabular}{ccc}
		Number Fields& Function Fields & Manifolds\\\hline
		Number Ring \(\Spec \OK\)& Smooth Projective Curve \(X/\FF_q\)& 3-manifold \(M\)\\
		\(\ZZ_p\)--extension \(\Spec \cO_{K[\mu_{p^\infty}]}\)&Curve over Algebraic Closure \(\bar{X}/\bar{\FF}_q\)&Riemann Surface \(\Sigma\)\\
		\(\gamma\) action&Frobenius Action &Monodromy Action\\
		\end{tabular}
		\end{center}
	
		A very interesting further research question would be to prove a trace--path integral formula over number fields, where a path integral over \(X=\Spec \OK\) is expressed as the trace an action \(\gamma\) on a space \(\cH\), analogous to the quantisation of a space of fields on  \(\Spec \cO_{K[\mu_{p^\infty}]}\).
	\item \textbf{Trace--path integral formulae for other actions.} The trace--path integral formula proved in this paper is for the Abelian Arithmetic Chern--Simons action. In theory one should expect similar formulae to hold for path integrals of other arithmetic actions, such as non-abelian Arithmetic Chern--Simons actions, or the arithmetic BF action defined in \cite{CKNoteAbelianArithmetic}. 

	\item \textbf{Sheaf-theoretic versions of trace--path integral formulae.} By studying how the Trace and Path Integrals deform upon varying the curve, one should expect a Trace-Path integral formula which relates to the universal theta divisor on the moduli space $\mathcal{M}_g$ of genus $g$ curves, where the work of this paper can be viewed as the case on a single point in $\mathcal{M}_g$. 
\end{enumerate}
\end{remark}

\printbibliography
 
\end{document}